\documentclass[12pt,a4paper]{amsart}
\usepackage{graphicx,amssymb}
\usepackage{amsmath,amssymb,amscd}
\input xy
\xyoption{all}
\usepackage[all]{xy}
\usepackage{hyperref}
\textwidth15cm
\hoffset=-0.8cm
\voffset=-0.2cm

\newcommand{\bs}[1]{\boldsymbol{#1}}
\newcommand{\lra}{\longrightarrow}

\newcommand{\ra}{\rightarrow}

\newcommand{\ZZ}{\mathbb Z}
\newcommand{\QQ}{\mathbb Q}

\newcommand{\FF}{\mathbb F}

\newcommand{\cE}{\mathcal{E}}

\newcommand{\Ker}{\mbox{Ker}}

\newcommand{\Hom}{\mbox{Hom}}

\newcommand{\Jac}{\mbox{Jac}}

\newcommand{\ch}{\operatorname{ch}}
\theoremstyle{plain}
\newtheorem{theorem}{Theorem}[section]
\newtheorem{lem}[theorem]{Lemma}
\newtheorem{prop}[theorem]{Proposition}
\newtheorem{cor}[theorem]{Corollary}

\newtheorem{rem}[theorem]{Remark}
\newtheorem{ex}[theorem]{Example}
\numberwithin{equation}{section}

\begin{document}
\title[Brill-Noether loci in rank 2]{Non-emptiness of Brill-Noether loci in $M(2,L)$}

\author{H. Lange}
\author{P. E. Newstead}
\author{V. Strehl}

\address{H. Lange\\Department Mathematik\\
              Universit\"at Erlangen-N\"urnberg\\
              Cauerstrasse 11\\
              D-$91058$ Erlangen\\
              Germany}
              \email{lange@mi.uni-erlangen.de}
\address{P. E. Newstead\\Department of Mathematical Sciences\\
              University of Liverpool\\
              Peach Street, Liverpool L69 7ZL, UK}
\email{newstead@liv.ac.uk}              
\address{V. Strehl\\Department Informatik\\
              Universit\"at Erlangen-N\"urnberg\\
              Martensstrasse 3\\
              D-$91058$ Erlangen\\
              Germany}
\email{volker.strehl@informatik.uni-erlangen.de}

\date{\today}

\thanks{The first and second authors are members of the research group VBAC (Vector Bundles on Algebraic Curves). The second author 
would like to thank the Department Mathematik der Universit\"at 
         Erlangen-N\"urnberg for its hospitality}
\keywords{Stable vector bundle, Brill-Noether locus, Porteous formula}
\subjclass[2010]{Primary: 14H60; Secondary: 14F05}

\begin{abstract}
Let $C$ be a smooth projective complex curve of genus $g \geq 2$. We investigate the Brill-Noether locus consisting of stable bundles of 
rank 2 and determinant $L$ of odd degree $d$ having at least $k$ independent sections. This locus possesses a virtual fundamental class. 
We show that in many cases this class is non-zero, which implies that the Brill-Noether locus is non-empty. For many values of $d$ and $k$
the result is best possible. We obtain more precise results for $k\le5$. An appendix contains the proof of a combinatorial lemma which we need.

\end{abstract}
\maketitle

\section{Introduction}\label{intro}

Let $C$ be a smooth projective complex curve of genus $g \geq 2$. Let $M(2,d)$ be the moduli space of stable bundles of rank $2$ and degree $d$ and, for any line bundle 
$L$ of degree $d$, let $M(2,L)$ denote the moduli space of stable bundles of rank 2 and determinant $L$. The Brill-Noether locus $B(2,d,k) \subset M(2,d)$ is defined by
$$
B(2,d,k) := \{ E \in M(2,d) \;|\; h^0(E) \geq k \}.
$$
Similarly
$$
B(2,L,k) := B(2,d,k) \cap M(2,L).
$$
If $d \leq k+2g -2$, then $B(2,d,k)$ is a degeneracy locus whose expected dimension is
$$
\beta(2,d,k) := 4g-3 - k(k-d+2g -2).
$$
Similarly $B(2,L,k)$ is a degeneracy locus whose expected dimension is
$$
\beta(2,d,k) -g = 3g-3 - k(k-d+2g-2).
$$

A great deal is known about $B(2,d,k)$ (see for example \cite{te3} and more recently \cite{cf1} and \cite{cf2}; also \cite{te1} and \cite{bmno} for the case of general rank). Much less is known about $B(2,L,k)$, except when $L=K$, where $K$ is the canonical bundle on $C$ (see \cite{lnp} for a recent result and further references). In \cite{te2} Teixidor 
obtained a sufficient condition for $B(2,L,k)$ to be non-empty and to have a component of dimension $\beta(2,d,k) -g$.
When $d= 2g-1-2r$ for a positive integer $r$, this condition becomes 
\begin{equation} \label{eqteix}
g \geq \left\{ \begin{array}{ll}
              \frac{k(k+2r-1)}{2} & \mbox{for} \;k\; \mbox{even}\\
               \frac{(k+1)(k+2r-1)}{2} + 1 & \mbox{for} \; k \; \mbox{odd}.
              \end{array} \right.
\end{equation}
The proof uses degenerations of $C$ and assumes that $C$ and $L$ are both general; however, a semi-continuity argument then shows that the results for non-emptiness are valid for any $C$ and any $L$. Recent work of Osserman \cite{o, o2} contains new information about the dimension of $B(2,L,k)$ and also a non-emptiness result for $k=2$ \cite[Theorem 1.3]{o2}; the dimensional formula has been further extended by Naizhen Zhang \cite{zh}. A complete solution is known for $k \leq 3$ (see \cite{gn} and Remark \ref{rem5}).

In this paper we use a different method to investigate the non-emptiness of $B(2,L,k)$ for $d$ odd. In this case, $M(2,d)$ and $M(2,L)$ are smooth projective varieties. Suppose $d=2g-1-2r$ with $r \geq 1$. Then
$$
\beta(2,d,k) -g  = 3g-3 - k(k+2r-1)
$$
and $B(2,L,k)$ possesses a virtual fundamental class $b(r,k)$ which is independent of the choice of $L$ with $L$ of degree $d$. Note also that, expressed in this form, the expected codimension of $B(2,L,k)$ is $k(k+2r-1)$, which is independent of $g$.
If $b(r,k) \neq 0$, then certainly $B(2,L,k) \neq \emptyset$ for all $L$ of degree $d$. 
Equivalently, the projection $B(2,d,k) \ra \Jac^d(C)$ given by taking determinants is surjective. The 
converse is in general false, since it can (and very often does) happen that $B(2,L,k)$ has dimension $> \beta(2,d,k) -g$. The method is similar to that of \cite{lnp}.

Following some preliminaries in Section 2 concerning the cohomology of $M(2,L)$,we obtain a polynomial formula (independent of $g$) for the class $b(r,k)$ in Section 3. In Section 4, we compute certain values of this polynomial (Proposition \ref{prop3.4}, which depends on a combinatorial lemma (Lemma \ref{lem4.4})). 

As in \cite{lnp}, detailed calculations of $b(r,k)$
are easier if $g$ is a sufficiently large prime. In this way we prove in Section 5,\\

\noindent
{\bf Theorem 5.2}.
{\it Suppose $g$ is a prime with 
$$
g > \max \left\{ \frac{(k+2r-2)(k-1)}{2},\frac{k(k+2r-1)}{3}+1, 2k +2r-1\right\}.
$$
Then $b(r,k) \neq 0$.}\\

\noindent
{\bf Theorem 5.3}.
{\it Let $g_{r,k}$ be the smallest prime such that 
$$
g_{r,k}  > \max \left\{ \frac{(k+2r-2)(k-1)}{2},\frac{k(k+2r-1)}{3}+1, 2k+2r-1 \right\}.
$$
Then, for $r \geq 1$ and $L$ a line bundle of degree $2g-1-2r$,
$$
B(2,L,k) \neq \emptyset \quad \mbox{and} \quad B(2,K \otimes L^*,k+2r-1) \neq \emptyset
$$
for all $g \geq g_{r,k}$.}\\

The condition on $g$ in the statements of the theorems is slightly less restrictive than that of \eqref{eqteix}. Consequently in some cases we have improvements of the results of \cite{te2}.
One can obtain much better results for small values of $r$ and $k$ using Maple. Let $g'_{r,k}$ be the 
smallest prime such that
$$
g'_{r,k} \geq \frac{k(k+2r-1)}{3} +1.
$$ 
Note that this inequality is equivalent to $\beta(2,d,k) -g'_{r,k} \geq 0$. Then we claim that $b(r,k) \neq 0$ for all $g \geq g'_{r,k}$.
The values of $r$ and $k$ for which we have verified this are listed in Remark \ref{rem3.8}.

In Section \ref{r=1}, we calculate $b(1,k)$ exactly for $k\le5$ (using Maple) and consider the possible geometrical interpretation of these calculations. We also obtain precise conditions for non-emptiness of $B(2,L,k)$ for $k\le3$. Finally, in the appendix, we give the proof of Lemma \ref{lem4.4}.

Throughout the paper $C$ is a smooth projective complex curve of genus $g \geq 2$.

We thank the referee for comments leading to some improvements in presentation.

\section{Preliminaries}\label{prelim}

Let $M(2,L)$ and $B(2,L,k)$ be as in the introduction, with $L$ a line bundle of odd degree $d<2g-2$. Write also $d=2g-1-2r$, where $r$ is a positive integer. The moduli space $M(2,L)$ supports a universal bundle $\cE$ on $C \times M(2,L)$ and $B(2,L,k)$ can be viewed as a degeneracy locus in the following way. Choose an effective divisor $D$ of degree $\ge g+r-1$ on $C$. Denote also by $D$ the pullback of $D$ to $C\times M(2,L)$ and consider the exact sequence
$$
0 \ra \cE \ra \cE(D) \ra \cE|_D \ra 0.
$$
Taking direct images via the projection $p_2: C\times M(2,L)\to M(2,L)$, we get the exact sequence
$$
0 \ra {p_2}_*\cE(D) \ra {p_2}_*\cE|_D \ra R^1p_2 \cE \ra 0.
$$
(Note that $p_{2*}\cE = 0$, since $H^0(E) = 0$ for a general $E \in M(2,L)$.)
The Brill-Noether locus $B(2,L,k)$ is then the corank $k$ degeneracy locus of the homomorphism $
{p_2}_*\cE(D) \ra {p_2}_*\cE|_D$. 
Note that the vector bundle $E:=  {p_2}_*\cE(D)$ is of rank $ 2 \deg D+1-2r$ and $F:= {p_2}_*\cE|_D$ of rank $2 \deg D$, so the ``expected dimension'' of $B(2,L,k)$ is 
$$\beta(2,d,k)-g=3g-3-k(k+2r-1).$$
This means that every component of $B(2,L,k)$ has dimension at least $\beta(2,d,k)-g$, but it does not imply that all (or any) of its components are of this dimension or even that it is non-empty when $\beta(2,d,k)-g\ge0$. However it does imply that $B(2,L,k)$ possesses a virtual fundamental class 
$$b(r,k)\in H^{2k(k+2r-1)}(M(2,L),\ZZ).$$ 
Moreover, if $b(r,k)\ne0$, then $B(2,L,k)\ne\emptyset$.

Following \cite{n} and noting that $d=2g-1-2r$, we can  write the Chern classes of $\cE$ as
$$
c_1(\cE) = \alpha + (2g-1-2r) \varphi,
$$
$$
c_2(\cE) = \frac{\alpha^2 - \beta}{4} + \psi + (g-r)\alpha \otimes \varphi.$$
Here $\alpha$ is the positive generator of $H^2(M(2,L),\ZZ)\simeq \ZZ$, 
$$\beta\in H^4(M(2,L),\ZZ),\ \psi\in H^3(M(2,L),\ZZ)\otimes H^1(C,\ZZ)$$ 
and $\varphi$ is the fundamental class of $C$. We define
$\gamma\in H^6(M(2,L),\ZZ)$ by 
$$\psi^2=\gamma\otimes\varphi.$$
The subalgebra of $H^*(M(2,L),\QQ)$ generated by $\alpha$, $\beta$ and $\gamma$ can be written as
$\QQ[\alpha,\beta,\gamma]/I_g$,
and the ideal of relations $I_g$ is explicitly described in \cite{kn}. This ideal depends only on $g$ provided $\deg L$ is odd. 
For any polynomial $f\in\QQ[\alpha,\beta,\gamma]$, we denote by $(f)$ the corresponding cohomology class. It is proved in \cite[Lemma 3.1]{kn} that, if $g\ge g_0$, then
\begin{equation}\label{eqp1}
f\in I_g\Longrightarrow f\in I_{g_0}.
\end{equation} 

In general, it is quite complicated to determine whether a given polynomial $f$ is in $I_g$. However,  Thaddeus \cite{th} gave formulae for the intersection numbers $(\alpha^m\beta^n\gamma^p)$ ($m+2n+3p=3g-3$); we need only a particular deduction from these formulae, which was proved in \cite{lnp}.

\begin{lem}\label{lemp1}
{\em \cite[Lemma 5.1]{lnp}} Suppose that $g$ is an odd prime and $m + 2n + 3p = 3g-3$. Then
$$
(\alpha^m \beta^n \gamma^p) \equiv  \left\{ \begin{array}{ll}
                                         -1 \mod g & \mbox{if}\; p=0 \;\mbox{and} \; m= g-1, 2g-2 \; \mbox{or}\; 3g-3,\\
                                         0 \mod g & \mbox{otherwise}.
                                        \end{array} \right.
$$                                        
\end{lem}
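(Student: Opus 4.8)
The plan is to read the statement off Thaddeus's closed formula for the intersection numbers and analyse it $g$-adically. For $m+2n+3p=3g-3$, Thaddeus \cite{th} expresses $(\alpha^m\beta^n\gamma^p)$ as a sign times the product of the factorial ratio $\tfrac{g!}{(g-p)!}$, a second factorial ratio involving $m$ and $q:=m-g+p+1$, a power of $2$, and a rational number of the shape $\tfrac{2^{q}-2}{q}\,B_q$ (where $q=0$ requires the degenerate form of the formula and $q<0$ forces the number to vanish). The two arithmetic inputs are Fermat's little theorem, which gives $2^{g-1}\equiv 1\pmod g$ and so controls every power of $2$ that occurs, and the von Staudt--Clausen theorem, which says that for even $q$ the prime $g$ divides the denominator of $B_q$ precisely when $(g-1)\mid q$, and then only to the first power, with $gB_q\equiv-1\pmod g$ in that case.

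First I would dispose of the terms with $p\geq1$. The intersection number is $0$ for $p\geq g$ on dimension grounds, while for $1\leq p\leq g-1$ the factor $\tfrac{g!}{(g-p)!}=g(g-1)\cdots(g-p+1)$ contributes exactly one factor of $g$. One then checks that the only remaining possible pole at $g$ is $B_q$ with $(g-1)\mid q$; tracing the constraint $m+2n+3p=3g-3$ shows this can happen only when $q=g-1$, whence $m+p=2g-2$, and in that situation the second factorial ratio $\tfrac{m!}{q!}=\tfrac{(2g-2-p)!}{(g-1)!}$ is a product of consecutive integers beginning at $g$ and so supplies a compensating factor of $g$. Hence $(\alpha^m\beta^n\gamma^p)$ has $g$-adic valuation $\geq1$ for every $p>0$. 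For $p=0$ one has $m+2n=3g-3$, so $m$ is even with $0\leq m\leq 3g-3$ and $q=m-g+1$; here $\tfrac{m!}{q!}$ is a product of $g-1$ consecutive integers ending at $m$, which contains a multiple of $g$ unless $m\equiv-1\pmod g$, and the unique even $m$ in $[g-1,3g-3]$ with $m\equiv-1\pmod g$ is $m=g-1$. Likewise $(g-1)\mid q$ occurs in this range exactly for $m\in\{g-1,2g-2,3g-3\}$. Putting these together: when $m<g-1$ the number is $0\equiv0$; for even $m$ in $[g-1,3g-3]$ outside $\{g-1,2g-2,3g-3\}$ the factorial ratio contributes a factor of $g$ and everything else is $g$-integral, so the number is $\equiv0$; for $m=2g-2$ and $m=3g-3$ the single factor of $g$ from $\tfrac{m!}{q!}$ exactly kills the simple pole of $B_q$, leaving a $g$-adic unit; and for $m=g-1$ one uses the degenerate $q=0$ form of the formula.

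It remains, in the three cases $m=g-1,2g-2,3g-3$ (with $p=0$), to pin the resulting residue down to $-1$ on the nose; this is the most delicate point. Here one reduces the factorials, the power of $2$ and the Bernoulli number modulo $g$ using Fermat's theorem and the von Staudt--Clausen congruence $gB_q\equiv-1\pmod g$ for $(g-1)\mid q$; one must also check that no concealed factor of $g$ sits in the denominator $q$ of $\tfrac{2^q-2}{q}$, which holds because $g\mid q$ would force $m\equiv-1\pmod g$, already excluded. Once the arithmetic of these three cases is carried through, the constant $-1$ emerges uniformly, and the lemma follows.
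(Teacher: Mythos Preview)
The paper does not itself prove this lemma; it is quoted from \cite[Lemma 5.1]{lnp}, with the surrounding text noting only that it is ``a particular deduction from [Thaddeus's] formulae''. Your approach---substituting Thaddeus's closed expression for $(\alpha^m\beta^n\gamma^p)$ and reducing modulo $g$ via Fermat's little theorem and the von Staudt--Clausen theorem---is precisely the kind of deduction indicated, and is the argument carried out in \cite{lnp}.

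Your outline is essentially correct. Two points are worth making explicit: since $g$ is odd and $m+2n+3p=3g-3$, one has $m+p$ even and hence $q$ even, so odd Bernoulli numbers never enter; and in the three exceptional cases you will also need Wilson's theorem $(g-1)!\equiv-1\pmod g$ when reducing the factorial ratios. The only genuinely incomplete step in your sketch is the last one, where you assert that the residue is exactly $-1$ in each of the three cases without carrying the arithmetic through; this requires care with the sign conventions in Thaddeus's formula and with the degenerate $q=0$ case (for $m=g-1$), but once done it confirms the lemma.
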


Finally, recall that, if $G$ is any vector bundle of rank $2$ with Chern classes $c_1$, $c_2$, we can write formally
$$1+c_1+c_2=\left(1+ \frac{c_1 - \sqrt{c_1^2 -4c_2}}{2} \right) \cdot\left(1+\frac{c_1+\sqrt{c_1^2 - 4c_2}}{2}\right)$$
and then, for any $n\ge0$, the Chern character of $G$ is given by
\begin{equation}\label{eqp2}
n!\ch_{n}(G) = \left( \frac{c_1 - \sqrt{c_1^2 -4c_2}}{2} \right)^n + \left(\frac{c_1+\sqrt{c_1^2 - 4c_2}}{2}\right)^n.
\end{equation}
We shall write the right hand side of this formula for short in the form
$$\left( \frac{c_1 - \sqrt{c_1^2 -4c_2}}{2} \right)^n + (.)$$
and do the same for other similar expressions.

\section{The fundamental class}\label{fund}

Recall the bundles $E$ and $F$ from Section \ref{prelim} and write $c_i:=c_i(F-E)$. By the Porteous formula \cite[II (4.2)]{acgh}, we have
\begin{equation}\label{eq2.1}
b(r,k)= \left| \begin{array}{cccc}
                                        c_{k+2r-1}&c_{k+2r}&\cdots&c_{2k+2r-2}\\
                                        c_{k+2r-2}&c_{k+2r-1}&\cdots&c_{2k+2r-3}\\
                                        \cdots&&&\cdots\\
                                        c_{2r}&c_{2r+1}&\cdots&c_{k+2r-1}
                                        \end{array}  \right|.
\end{equation}

Our main object in this section is to compute the Chern classes $c_i$. For this, note first that, if we choose $D=q_1+\ldots +q_{\deg D}$ with distinct points $q_i$, then $F\simeq\oplus_i^{\deg D}\cE|_{\{q_i\}\times M(2,L)}$. Topologically the bundles $\cE|_{\{q_i\}\times M(2,L)}$ are all isomorphic and we denote any one of them by $\cE_M$. We have then
\begin{equation}\label{eq2.2}
\ch(F)=\deg(D)\ch(\cE_M).
\end{equation}

\begin{lem} \label{lem2.1} For $n\ge1$,
$$
2^{n+1} \ch_{n}(F - E) =
$$
$$
\frac{1}{n!}(\alpha - \sqrt{\beta})^n(2r-1)
- \frac{1}{n!}(\alpha - \sqrt{\beta})^n\left(\frac{\alpha}{\sqrt{\beta}} + \frac{2\gamma}{\beta^{\frac{3}{2}}} \right) 
- \frac{1}{2(n-1)!}(\alpha - \sqrt{\beta})^{n-1} \frac{4\gamma}{\beta} + (.).
$$
\end{lem}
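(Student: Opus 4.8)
The plan is to compute the Chern character of $F-E$ by computing $\ch(E)$ and $\ch(F)$ separately from the Chern classes of $\cE$ recorded in Section~\ref{prelim}, and then to assemble the two expressions. For $\ch(F)$ I would use \eqref{eq2.2}: since $F$ is (topologically) a direct sum of $\deg D$ copies of $\cE_M := \cE|_{\{q\}\times M(2,L)}$, one has $\ch(F)=\deg(D)\,\ch(\cE_M)$, and $\cE_M$ is a rank-$2$ bundle on $M(2,L)$ whose Chern classes are obtained by restricting $c_1(\cE),c_2(\cE)$ to a slice $\{q\}\times M(2,L)$, i.e.\ by killing the factors of $\varphi$. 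This gives $c_1(\cE_M)=\alpha$ and $c_2(\cE_M)=\frac{\alpha^2-\beta}{4}$, so by \eqref{eqp2} the Chern character of $\cE_M$ in degree $n$ is $\frac{1}{n!}\left(\frac{\alpha-\sqrt{\beta}}{2}\right)^n+(.)$; multiplying by $\deg D$ yields $\ch_n(F)$. For $\ch(E)={p_2}_*(\ch(\cE(D))\,\mathrm{td}(C))$ I would apply Grothendieck--Riemann--Roch to $p_2\colon C\times M(2,L)\to M(2,L)$, so the contribution is the coefficient of $\varphi$ in $\ch(\cE)\cdot(1+D\varphi)\cdot(1-(g-1)\varphi)$; here the Todd class of $C$ contributes $1-(g-1)\varphi$ and twisting by $\cO(D)$ contributes $e^{(\deg D)\varphi}=1+(\deg D)\varphi$ modulo $\varphi^2=0$.

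The key computational step is therefore to extract the $\varphi$-coefficient of $\ch(\cE)$. Writing $c_1(\cE)=\alpha+(2g-1-2r)\varphi$ and $c_2(\cE)=\frac{\alpha^2-\beta}{4}+\psi+(g-r)\alpha\varphi$, and remembering that $\psi^2=\gamma\varphi$ while $\psi\varphi=0$ and $\varphi^2=0$, one sets $x_\pm=\frac{c_1\pm\sqrt{c_1^2-4c_2}}{2}$ and expands $x_-=\frac{\alpha-\sqrt\beta}{2}+\big(\text{coeff}\big)\varphi$ to first order in $\varphi$. The quantity $c_1^2-4c_2=\beta+\big[2(2g-1-2r)\alpha-4(g-r)\alpha\big]\varphi-4\psi=\beta-2\alpha\varphi-4\psi$ modulo $\varphi^2$, and since $\psi$ is odd one has $(\sqrt{c_1^2-4c_2})^{\phantom{0}}$ expanded via $\sqrt{\beta-2\alpha\varphi-4\psi}=\sqrt{\beta}\,\big(1-\tfrac{\alpha\varphi}{\beta}-\tfrac{2\psi}{\beta}-\tfrac{1}{2}\tfrac{4\psi^2}{\beta^2}\big)^{1/2}$; collecting the $\varphi$-part (using $\psi^2=\gamma\varphi$) gives a term in $\varphi$ equal to $-\tfrac{\alpha}{2\sqrt\beta}-\tfrac{\gamma}{\beta^{3/2}}$ together with an odd piece proportional to $\psi$ that dies after multiplication by $\psi$ or drops out of the $\varphi$-coefficient appropriately. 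Substituting into $x_-^{\,n}$ and into $x_-^{\,n-1}\cdot(\text{linear term})$ produces exactly the three contributions displayed: the $(2r-1)$-term comes from $(\deg D)-$the combined $E$-twist data once one remembers $\deg D$ cancels between $\ch(F)$ and the rank/twist of $E$ (leaving $2r-1$), the $\frac{\alpha}{\sqrt\beta}+\frac{2\gamma}{\beta^{3/2}}$-term comes from differentiating $x_-^{\,n}$ in the $\varphi$-direction, and the final $\frac{1}{2(n-1)!}(\alpha-\sqrt\beta)^{n-1}\frac{4\gamma}{\beta}$-term comes from the second-order piece of the square root (the $\psi^2$ contribution). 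Multiplying through by $2^{n+1}$ absorbs the powers of $2$ in $x_-=\frac{\alpha-\sqrt\beta}{2}+\cdots$.

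The main obstacle I anticipate is bookkeeping the $\psi$-terms correctly: $\psi\in H^3(M)\otimes H^1(C)$ is odd, so one must be careful that cross terms linear in $\psi$ integrate to zero over $C$ (they do, since $H^1(C)$ pairs trivially with $1$ and with $\varphi$ in the relevant slots) while the quadratic term $\psi^2=\gamma\varphi$ survives and is precisely what feeds the $\gamma$-dependent pieces. One also has to handle $\sqrt\beta$ and $\sqrt{c_1^2-4c_2}$ purely formally — only even powers appear in the final symmetric expression $x_-^{\,n}+x_+^{\,n}$, so the square roots drop out and the formula is a genuine polynomial identity in $\alpha,\beta,\gamma$ — and keep track of which terms are symmetrized (yielding the ``$+(.)$'') versus which are already symmetric. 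Once the $\varphi$-coefficient of $\ch_n(\cE)$ is in hand, the rest is routine substitution and the stated identity follows.
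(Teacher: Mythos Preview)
Your approach is essentially identical to the paper's: both compute $\ch(F-E)$ via Grothendieck--Riemann--Roch, reduce to $(g-1)\ch(\cE_M)-\int_C\ch(\cE)$ after the $\deg D$ contributions cancel, expand $\sqrt{c_1^2-4c_2}=\sqrt{\beta-4\psi-2\alpha\varphi}$ to second order (so that $\psi^2=\gamma\varphi$ supplies the $\gamma$-terms), and read off the $\varphi$-coefficient. Your bookkeeping sketch is accurate; the only slightly imprecise sentence is the provenance of the $2r-1$ factor, which in fact arises as $2(g-1)-(2g-1-2r)$ after the $2^{n+1}$ normalization, rather than directly from a $\deg D$ cancellation.
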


\begin{proof}
Using \eqref{eq2.2} and Grothendieck-Riemann-Roch, we obtain
\begin{eqnarray*}
\ch(F-E)& =& \deg(D) \ch(\cE_M) - \deg(D) \ch(\cE_M)  - \ch(\cE(C)) + (g-1)\ch(\cE_M)\\
&=& (g-1)\ch(\cE_M) - \ch(\cE(C)),
\end{eqnarray*}
where 
$$
\ch \cE(C) = \int_C \ch \cE.
$$
 
Now, by \eqref{eqp2},
$$
n! \ch_n \cE_M =\left(\frac{\alpha - \sqrt{\beta}}{2} \right)^n + \left( \frac{\alpha + \sqrt{\beta}}{2} \right)^n
$$
and
$$
(n+1)! \ch_{n+1}\cE = \left( \frac{\alpha + (2g-1-2r)\varphi - \sqrt{\beta -4\psi -2\alpha \varphi}}{2} \right)^{n+1} + (.).
$$
Now expand
\begin{eqnarray*}
\sqrt{\beta - 4 \psi - 2 \alpha \varphi} & = & \sqrt{\beta} \left( 1 - \frac{4\psi}{\beta} - \frac{2 \alpha \varphi}{\beta} \right)^{\frac{1}{2}}\\
& = & \sqrt{\beta} \left( 1 - \left(\frac{2\psi}{\beta} + \frac{\alpha\varphi}{\beta} \right) - 2 \frac{\gamma \varphi}{\beta^2} \right).
\end{eqnarray*}
Hence
$$
(n+1)! \ch_{n} \cE(C) = \frac{1}{2^{n+1}} \int_C  \left( \alpha - \sqrt{\beta} + \frac{2\psi}{\sqrt{\beta}} + \left(2g-1-2r + \frac{\alpha}{\sqrt{\beta}} + 
\frac{2\gamma}{\beta^{\frac{3}{2}}} \right) \varphi \right)^{n+1} + \left(.\right)
$$
$$
= \frac{1}{2^{n+1}} \left[ (n+1)(\alpha - \sqrt{\beta})^n \left(2g-1-2r +\frac{\alpha}{\sqrt{\beta}} + \frac{2\gamma}{\beta^{\frac{3}{2}}} \right) +
{n+1 \choose 2}(\alpha - \sqrt{\beta})^{n-1} \frac{4\gamma}{\beta} \right] + (.).
$$ 
Since
$$
 \ch_n(F-E) = (g-1) \ch_n \cE_M -\ch_n \cE(C),
$$
this implies the assertion.
\end{proof}

Turning now to Chern classes, we have of course $c_0=1$. We write also $c_n=0$ for $n<0$.

\begin{prop}\label{prop2.2}
Let $c_i=c_i(F-E)$. Then, for every integer $n$,
$$
(n+4)c_{n+4} + (2n +6 -r)\alpha c_{n+3} 
+ \left[ \left(n+2-r\right) \alpha^2 + (2n +5-2r) \frac{\alpha^2-\beta}{4} \right] c_{n+2}
$$
$$
+\left[(2n+3-3r) \alpha \frac{\alpha^2 - \beta}{4} + \frac{\gamma}{2} \right]c_{n+1} 
+ \frac{1}{16}(\alpha^2-\beta)^2(n+1-2r)c_{n} = 0.
$$
\end{prop}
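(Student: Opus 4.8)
The strategy is to convert the formula of Lemma \ref{lem2.1} for the Chern character of $F-E$ into a recursion for the Chern classes by means of Newton's identities, which relate $\ch_n$ to the $c_i$. Concretely, write $G:=F-E$ (as a virtual bundle) and recall that Newton's identities give, for each $n\ge1$,
\begin{equation}\label{eqnewton}
n!\,\ch_n(G)=p_n,\qquad p_n=-\sum_{i=1}^{n}(-1)^i c_i\,p_{n-i},\quad p_0=\mathrm{rk}(G),
\end{equation}
where $p_n$ is the $n$-th power sum of the Chern roots. The right-hand side of Lemma \ref{lem2.1}, after multiplying by $n!$, is a linear combination of terms of the shape $(\alpha-\sqrt\beta)^{n}$ and $(\alpha-\sqrt\beta)^{n-1}$ (plus conjugates), with coefficients that are fixed rational functions of $\alpha,\beta,\gamma$ independent of $n$. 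The key observation is that a sequence $a_n$ of the form $a_n = u(\alpha-\sqrt\beta)^n + v(\alpha-\sqrt\beta)^{n-1} + (.)$ satisfies a fixed linear recursion with constant (in $n$) coefficients: since $x=\tfrac{\alpha\pm\sqrt\beta}{2}$ are the two roots of $t^2-\alpha t+\tfrac{\alpha^2-\beta}{4}=0$, one gets from the base case that $4\,a_{n+2}-4\alpha\,a_{n+1}+(\alpha^2-\beta)a_n$ is a much simpler expression, and iterating kills the lower-order part as well, so that $\ch_n(G)$ itself satisfies a short linear recursion in $n$ with polynomial coefficients.

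The cleanest route is therefore: first I would establish, purely from Lemma \ref{lem2.1}, a linear recursion of the form
$$
\sum_{j} A_j(\alpha,\beta,\gamma,n)\,\ch_{n+j}(G)=0
$$
with explicit polynomial coefficients $A_j$ (the $n$-dependence entering linearly, reflecting the factors $\tfrac1{n!}$ and $\tfrac1{(n-1)!}$). Then I would feed this into \eqref{eqnewton}: the generating-function identity $\sum_{n\ge0}p_n z^n = -z\,\tfrac{d}{dz}\log\!\big(\sum_i(-1)^i c_i z^i\big)$, equivalently $\big(\sum_i(-1)^i c_i z^i\big)'\cdot(-z) = \big(\sum_i(-1)^i c_i z^i\big)\cdot\big(\sum_n p_n z^n\big)$, translates a linear recursion with polynomial coefficients on the $p_n$ into one on the $c_i$. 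Matching the two recursions (and using $c_0=1$, $c_n=0$ for $n<0$) yields the five-term relation in the statement; the shift by $4$ and the specific coefficients $2n+6-r$, $(n+2-r)\alpha^2 + (2n+5-2r)\tfrac{\alpha^2-\beta}{4}$, etc., should drop out after collecting terms.

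An alternative, and perhaps more economical, approach avoids power sums entirely: one knows the total Chern class $c(G)=c(F)c(E)^{-1}$, and $c(F) = c(\cE_M)^{\deg D}$, $c(E)$ comes from Grothendieck--Riemann--Roch applied to $\cE(C)$, exactly as in the proof of Lemma \ref{lem2.1}. Writing $c(\cE_M)$ and the relevant factor for $E$ in terms of their Chern roots $\tfrac{\alpha\pm\sqrt\beta}{2}$ and the ``$\varphi$-direction'' roots, one sees that $\log c(G) = \sum_n (-1)^{n-1}\ch_n(G)\cdot n!/n \cdot(\text{sign})$ — more usefully, $\sum_i c_i z^i$ is an explicit finite product, and differentiating $\log$ of it produces precisely the recursion; the polynomial $t^2-\alpha t+\tfrac{\alpha^2-\beta}{4}$ and its square are visibly responsible for the coefficients $\tfrac14(\alpha^2-\beta)$ and $\tfrac1{16}(\alpha^2-\beta)^2$ appearing in the Proposition, and the lone $\tfrac{\gamma}{2}$ term is the contribution of the $\gamma$-linear part of Lemma \ref{lem2.1}.

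The main obstacle is purely bookkeeping: correctly tracking the half-integer powers of $\beta$ and the $\varphi$-truncation (the ``$+(.)$'' convention, i.e.\ that everything lives in $H^*(M(2,L))\oplus H^*(M(2,L))\otimes H^1(C)\oplus H^*(M(2,L))\otimes H^2(C)$ with $\varphi^2=0$), so that the $\gamma$-terms are produced in exactly the right amount and no spurious $\psi$-terms survive. Once the recursion for $\ch_n(G)$ is pinned down, converting it to the recursion for the $c_i$ via Newton's identities is mechanical, and checking the base cases $n=-3,\dots,0$ against the known low Chern classes $c_0=1$, $c_1,\dots,c_4$ (computable directly from Lemma \ref{lem2.1}) confirms the coefficients.
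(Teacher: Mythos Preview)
Your proposal is correct and essentially coincides with the paper's argument. In fact, what you label the ``alternative, more economical'' route is exactly what the paper does: it writes $c(t)=\exp\!\big(\sum_{n\ge1}(-1)^{n-1}(n-1)!\,\ch_n(F-E)\,t^n\big)$ using Lemma~\ref{lem2.1}, differentiates to obtain $c'(t)=c(t)\cdot[\ldots]$, sums the resulting geometric series so that the bracket becomes a sum of rational functions with denominators $(1+\tfrac{\alpha\pm\sqrt\beta}{2}t)$ and $(1+\tfrac{\alpha\pm\sqrt\beta}{2}t)^2$, and then clears denominators by multiplying through by $\big(1+\tfrac{\alpha-\sqrt\beta}{2}t\big)^2\big(1+\tfrac{\alpha+\sqrt\beta}{2}t\big)^2$ before comparing coefficients of $t^{n+3}$. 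Your first route (establishing a linear recursion for $\ch_n$ and then pushing it through Newton's identities) is a harmless repackaging of the same computation, since the generating-function identity you invoke \emph{is} the logarithmic derivative; the paper simply skips the intermediate recursion on $\ch_n$ by summing the series in closed form.
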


\begin{proof}
Let $c(t)=\sum_0^\infty c_nt^n$. Consider
\begin{eqnarray*}
c(t) &=& \exp(\log(c(t))\\& =& \exp\left( \ch_1(F-E) t - \ch_2(F-E) t^2 + \cdots + (-1)^{n-1} (n-1)!\ch_n(F-E) t^n +\cdots \right)
\end{eqnarray*}
$$
= \exp \left[ (2r-1) \sum_{n=1}^\infty \frac{(-1)^{n-1}}{2^{n+1}} (\alpha - \sqrt{\beta})^n \frac{t^n}{n} \right. 
$$
$$
 \left. - \sum_{n=1}^\infty \frac{(-1)^{n-1}}{2^{n+1}} (\alpha - \sqrt{\beta})^n \left( \frac{\alpha}{\sqrt{\beta}} 
+ \frac{2\gamma}{\beta^{\frac{3}{2}}} \right) \frac{t^n}{n} 
- \sum_{n=1}^\infty \frac{(-1)^{n-1}}{2^n} (\alpha - \sqrt{\beta})^{n-1} \frac{\gamma}{\beta} t^n + (.) \right].
$$
So 
$$
\frac{d}{dt}(c(t)) = c(t) \left[ (2r-1) \sum_{n=1}^\infty \frac{(-1)^{n-1}}{2^{n+1}}(\alpha - \sqrt{\beta})^n t^{n-1} \right. 
$$
$$
\left. - \sum_{n=1}^\infty \frac{(-1)^{n-1}}{2^{n+1}} (\alpha - \sqrt{\beta})^n 
\left( \frac{\alpha}{\sqrt{\beta}} + \frac{2\gamma}{\beta^{\frac{3}{2}}} \right)t^{n-1} \right.
$$
$$
\left. - \sum_{n=1}^\infty \frac{(-1)^{n-1}}{2^{n}}  (\alpha - \sqrt{\beta})^{n-1} \frac{\gamma}{\beta} n t^{n-1} + (.) \right]
$$
$$
= c(t) \left[ \frac{2r-1}{4} (\alpha -\sqrt{\beta})\frac{1}{1 +\frac{\alpha - \sqrt{\beta}}{2} t} 
-  \frac{\frac{1}{4}(\alpha - \sqrt{\beta})  \left( \frac{\alpha}{\sqrt{\beta}} +
\frac{2\gamma}{\beta^{\frac{3}{2}}}\right)}{1 + \frac{\alpha - \sqrt{\beta}}{2}t} \right.
$$
$$
\left. - \frac{ \gamma}{2\beta} \frac{1}{(1 + \frac{\alpha - \sqrt{\beta}}{2}t)^2} + (.) \right].
$$

Substituting $c(t)=\sum_0^\infty c_nt^n$, multiplying by 
$$\left(1+\frac{\alpha-\sqrt\beta}2 t\right)^2\cdot\left(1+\frac{\alpha+\sqrt\beta}2 t\right)^2$$ 
and comparing the coefficients of $t^{n+3}$ gives the result (after some algebraic manipulation).
\end{proof}

Proposition \ref{prop2.2} allows us to consider $c_i(F-E)$ as a polynomial $c_i(\alpha,\beta,\gamma)$. We can therefore define a polynomial
$$
P_k(\alpha,\beta,\gamma) :=  \left| \begin{array}{cccc}
                                        c_{k+2r-1}&c_{k+2r}&\cdots&c_{2k+2r-2}\\
                                        c_{k+2r-2}&c_{k+2r-1}&\cdots&c_{2k+2r-3}\\
                                        \cdots&&&\cdots\\
                                        c_{2r}&c_{2r+1}&\cdots&c_{k+2r-1}
                                        \end{array}  \right|
$$                                        
such that
$$(P_k(\alpha,\beta,\gamma))=b(r,k).$$
\begin{rem}\label{rem33} {\rm Note that here $\alpha,\beta,\gamma$ are indeterminates (not cohomology classes) of degree $2,4,6$ respectively, making $P_k$ a homogeneous polynomial of degree $2k(k+2r-1)$.}
\end{rem}

In the next proposition, we obtain a simpler recurrence relation for $c_i(\alpha,\beta,0)$.

\begin{prop}
Let $c_i=c_i(\alpha,\beta,0)$. Then, for every integer $n$, 
\begin{equation}\label{erecurr}
(n+2)c_{n+2} + \left(n+1-r \right) \alpha c_{n+1} + (n+1-2r)\frac{\alpha^2 - \beta}{4}c_{n} = 0.
\end{equation}
\end{prop}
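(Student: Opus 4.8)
The plan is to revisit the generating-function computation in the proof of Proposition~\ref{prop2.2} and to exploit the drastic simplification that occurs when $\gamma=0$. Write $a=\tfrac{\alpha-\sqrt\beta}{2}$ and $b=\tfrac{\alpha+\sqrt\beta}{2}$, so that $a+b=\alpha$, $ab=\tfrac{\alpha^2-\beta}{4}$, $a-b=-\sqrt\beta$ and $(1+at)(1+bt)=1+\alpha t+\tfrac{\alpha^2-\beta}{4}t^2$. In the displayed expression for $\tfrac{d}{dt}c(t)$ obtained in that proof (the one with denominators $1+\tfrac{\alpha-\sqrt\beta}{2}t$), every term carrying a factor $\gamma$ vanishes on setting $\gamma=0$ — in particular the only double-pole term, the one with $(1+at)^{-2}$, disappears. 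Writing out the abbreviation $(.)$, I expect to be left with
$$
\frac{c'(t)}{c(t)}\bigg|_{\gamma=0}=\frac{aA}{1+at}+\frac{bB}{1+bt},\qquad A=\tfrac12\Bigl(2r-1-\tfrac{\alpha}{\sqrt\beta}\Bigr),\quad B=\tfrac12\Bigl(2r-1+\tfrac{\alpha}{\sqrt\beta}\Bigr);
$$
equivalently $c(t)|_{\gamma=0}=(1+at)^A(1+bt)^B$ as a formal power series in $t$.

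Next I would clear denominators. Multiplying the identity above by $(1+at)(1+bt)$ and recording the elementary relations $A+B=2r-1$, $aA+bB=r\alpha$ and $abA+baB=ab(A+B)=\tfrac{\alpha^2-\beta}{4}(2r-1)$, one is led to the first-order linear differential equation
$$
c'(t)\Bigl(1+\alpha t+\tfrac{\alpha^2-\beta}{4}t^2\Bigr)=c(t)\Bigl(r\alpha+\tfrac{\alpha^2-\beta}{4}(2r-1)\,t\Bigr).
$$
Substituting $c(t)=\sum_{n\ge0}c_nt^n$ and comparing the coefficients of $t^{n+1}$ on the two sides then yields \eqref{erecurr} for every $n\ge-1$; for $n\le-2$ the relation holds trivially, since then $c_{n+1}=c_n=0$ while $(n+2)c_{n+2}=0$ (recall $c_0=1$ and $c_n=0$ for $n<0$). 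This gives the assertion.

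There is no serious obstacle here; the one point needing care is the bookkeeping with the abbreviation $(.)$ together with the verification that $aA+bB=r\alpha$ — this is precisely the cancellation (using $a-b=-\sqrt\beta$, so that the $\sqrt\beta$- and $\tfrac1{\sqrt\beta}$-terms drop out) which makes \eqref{erecurr} a genuine polynomial recurrence. As a consistency check one may instead argue purely formally: writing $L_n$ for the left-hand side of \eqref{erecurr}, a short computation shows that $L_{n+2}+\alpha L_{n+1}+\tfrac{\alpha^2-\beta}{4}L_n$ equals the left-hand side of the relation of Proposition~\ref{prop2.2} with $\gamma=0$. Since that expression vanishes for the sequence $c_n(\alpha,\beta,0)$, the auxiliary sequence $d_n:=L_n$ satisfies $d_{n+2}+\alpha d_{n+1}+\tfrac{\alpha^2-\beta}{4}d_n=0$ together with $d_n=0$ for $n\ll0$, whence $d_n\equiv0$, which is again \eqref{erecurr}.
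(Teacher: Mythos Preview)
Your proposal is correct and follows essentially the same route as the paper: set $\gamma=0$ in the logarithmic-derivative computation from the proof of Proposition~\ref{prop2.2}, multiply through by $(1+at)(1+bt)=1+\alpha t+\tfrac{\alpha^2-\beta}{4}t^2$, and compare coefficients of $t^{n+1}$. Your version spells out the simplifications ($aA+bB=r\alpha$, $ab(A+B)=(2r-1)\tfrac{\alpha^2-\beta}{4}$) that the paper leaves implicit in its ``$+(.)$'' notation, and your secondary consistency check---factoring the $\gamma=0$ specialization of Proposition~\ref{prop2.2} as $L_{n+2}+\alpha L_{n+1}+\tfrac{\alpha^2-\beta}{4}L_n$---is a pleasant bonus not present in the paper.
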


\begin{proof}
By definition, $c_0 = 1$. From Lemma \ref{lem2.1} we get $c_1 = r \alpha$. 
If $\gamma = 0$, we have the equation
$$
\left(\left(1+ \frac{\alpha t}{2}\right)^2 - \frac{\beta}{4} t^2 \right) \sum_{n=1}^\infty nc_n t^{n-1} 
$$
$$
= \left[\frac{1}{4} \left(1+ \frac{\alpha + \sqrt{\beta}}{2}t \right) (\alpha - \sqrt{\beta}) \left( 2r-1 - \frac{\alpha}{\sqrt{\beta}} \right) 
+ (.) \right] \sum_{n=0}^\infty c_n t^n.
$$
Comparing the coefficients of $t^{n+1}$ gives \eqref{erecurr}.
\end{proof}

\begin{cor}  \label{cor2.4}
  If $\beta = \alpha^2$, then $c_n = 0$ for $n \geq r+1$.
\end{cor}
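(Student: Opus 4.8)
The plan is to read the conclusion straight off the recurrence \eqref{erecurr}. The point is that the hypothesis $\beta=\alpha^2$ makes the coefficient $\frac{\alpha^2-\beta}{4}$ vanish identically. Hence \eqref{erecurr} degenerates to the two-term relation
$$
(n+2)c_{n+2}+(n+1-r)\alpha\, c_{n+1}=0
$$
valid for every integer $n$; after the substitution $m=n+1$ this reads $(m+1)c_{m+1}=(r-m)\alpha\, c_m$ for all $m$.

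From here I would argue by induction on $n$. First take $m=r$: the coefficient $r-m$ of $\alpha c_m$ then vanishes, so $(r+1)c_{r+1}=0$ and therefore $c_{r+1}=0$. For the inductive step, assume $c_m=0$ for some $m\ge r+1$; then $(m+1)c_{m+1}=(r-m)\alpha\, c_m=0$, so $c_{m+1}=0$ as well. This yields $c_n=0$ for all $n\ge r+1$, which is the assertion. (Running the same relation upward from $c_0=1$, $c_1=r\alpha$ shows that $c_n$ is a nonzero multiple of $\alpha^n$ for $0\le n\le r$, so the bound $r+1$ is sharp.)

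There is no genuine obstacle here: once one observes that $\beta=\alpha^2$ kills the $\frac{\alpha^2-\beta}{4}$ term, the corollary is immediate. The only point needing a moment's care is the base case $m=r$ of the induction, where it is the vanishing of the \emph{coefficient} $r-m$, not of $c_m$ itself, that forces $c_{r+1}=0$. As a sanity check, note that $\beta=\alpha^2$ is exactly the condition $c_2(\cE_M)=\frac{\alpha^2-\beta}{4}=0$, so $\cE_M$ behaves formally like a sum of a trivial line bundle and a line bundle with first Chern class $\alpha$, and one expects the virtual Chern classes $c_n(F-E)$ to be polynomials in $\alpha$ that terminate — which is precisely what the computation above confirms.
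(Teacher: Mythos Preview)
Your proof is correct and is exactly the argument the paper has in mind: the paper's proof reads simply ``This follows immediately from \eqref{erecurr},'' and you have spelled out precisely why---once $\beta=\alpha^2$ kills the $\frac{\alpha^2-\beta}4$ term, the two-term recurrence $(m+1)c_{m+1}=(r-m)\alpha\,c_m$ forces $c_{r+1}=0$ (via the vanishing coefficient at $m=r$) and then $c_n=0$ for all $n\ge r+1$ by induction. Your extra remarks about sharpness and the interpretation $c_2(\cE_M)=\frac{\alpha^2-\beta}4=0$ are correct and helpful, though not needed for the corollary itself.
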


\begin{proof}
This follows immediately from \eqref{erecurr}. 
\end{proof}

\section{Computation of $P_k$}\label{comp}

Our object in this section is to compute $P_k(1,\beta,0)$ up to a non-zero constant. This polynomial contains a term $c\beta^{\frac{k(k+2r-1)}2}$ for some constant $c$; by Remark \ref{rem33}, this term is just $P_k(0,\beta,0)$. We prove first that $c\ne0$ by showing that $P_k(0,4,0)\ne0$.
 
For this, consider $\widetilde c_i := c_i(0,\beta,0)$. The recurrence relation for the $\widetilde c_i$ is 
$$
\widetilde c_0 = 1, \; \widetilde c_1 = 0\quad \mbox{and} \quad (n+2)\widetilde c_{n+2} = \frac{\beta}{4}(n+1-2r) \widetilde c_n
$$
for $n \geq 0$.

\begin{lem} \label{lem3.1}
For all $n$ we have $\widetilde c_{2n+1} = 0$ and, for $n \geq r$,
$$
\widetilde c_{2n} = (-1)^r \frac{(2r-1)(2r-3)\cdots 1}{2^{2n-r}n(n-1) \cdots (n-r+1)} \cdot \frac{(2n-2r)!}{((n-r)!)^2} \left(\frac{\beta}{4} \right)^n.
$$Furthermore, if $\beta = 4$, then for any odd prime $p > \max\{2r-1,n\}$,
$$
\widetilde c_{2n} \equiv (-1)^n  e_{n} \mod p,
$$
where $e_n$ is defined by $(1 + t)^{\frac{p+2r-1}{2}} = \sum_{i=0}^{\frac{p+2r-1}{2}} e_i t^i$. 
\end{lem}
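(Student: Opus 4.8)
The plan is to convert the two-term recurrence $\widetilde c_0=1$, $\widetilde c_1=0$, $(n+2)\widetilde c_{n+2}=\frac{\beta}{4}(n+1-2r)\widetilde c_n$ into a single closed product formula for $\widetilde c_{2n}$, and then extract all three assertions from it.

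First I would dispose of the odd indices: since $\widetilde c_1=0$ and the recurrence writes $\widetilde c_{2n+1}$ as a scalar multiple of $\widetilde c_{2n-1}$, induction gives $\widetilde c_{2n+1}=0$ for all $n$. For the even indices, setting $n=2m$ in the recurrence, solving for $\widetilde c_{2m+2}$, and telescoping down to $\widetilde c_0=1$ yields, for every $n\ge0$,
$$\widetilde c_{2n}=\left(\frac{\beta}{8}\right)^{n}\frac{\prod_{m=0}^{n-1}(2m+1-2r)}{n!},$$
again a one-line induction. The second assertion is then pure bookkeeping of this product when $n\ge r$: the $r$ factors with $0\le m\le r-1$ are negative and multiply to $(-1)^{r}(2r-1)(2r-3)\cdots1$, while the $n-r$ factors with $r\le m\le n-1$ are positive and multiply to $1\cdot3\cdots(2n-2r-1)=(2n-2r-1)!!=(2n-2r)!/(2^{n-r}(n-r)!)$; writing $n!=n(n-1)\cdots(n-r+1)\cdot(n-r)!$ and collecting powers of two (using $\beta^{n}/8^{n}\cdot 2^{-(n-r)}=(\beta/4)^{n}\cdot 2^{-(2n-r)}$) reproduces exactly the displayed formula.

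For the congruence I would put $\beta=4$ in the product formula, so $\widetilde c_{2n}=\prod_{m=0}^{n-1}(2m+1-2r)/(2^{n}n!)$ is an integer over $2^{n}n!$, hence $p$-integral for any odd prime $p>n$; the hypotheses on $p$ guarantee $2^{n}n!$ is invertible modulo $p$, so $\widetilde c_{2n}$ has a well-defined residue. On the other side, expanding $(1+t)^{(p+2r-1)/2}$ gives $e_n=\binom{(p+2r-1)/2}{n}=\prod_{m=0}^{n-1}(p+2r-1-2m)/(2^{n}n!)$; reducing the numerator modulo $p$ replaces each factor $p+2r-1-2m$ by $2r-1-2m=-(2m+1-2r)$, i.e.\ by $-1$ times the corresponding factor of the numerator of $\widetilde c_{2n}$. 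Hence $(-1)^{n}e_n\equiv\widetilde c_{2n}\pmod p$, which is the claim.

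There is no genuine obstacle; the only steps needing care are the sign-and-power-of-two bookkeeping that turns $\prod_{m=0}^{n-1}(2m+1-2r)$ into the double-factorial/factorial shape of the lemma, and verifying that the rational number $\widetilde c_{2n}$ is $p$-integral before asserting a congruence — which is precisely what the primality of $p$ and the bound $p>n$ provide.
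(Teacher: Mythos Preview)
Your proof is correct and follows essentially the same route as the paper: solve the two-term recurrence to get the product formula, split the product at $m=r$ to obtain the displayed closed form, and for $\beta=4$ match the product for $\widetilde c_{2n}$ against the product expansion of $e_n=\binom{(p+2r-1)/2}{n}$ modulo $p$. Your treatment of the congruence is in fact slightly more direct than the paper's, which first invokes the auxiliary congruence $\binom{(p-1)/2}{m}\equiv(-1)^m\frac{(2m)!}{2^{2m}(m!)^2}\pmod p$ from \cite{lnp} and then shifts from $\binom{(p-1)/2}{n-r}$ to $\binom{(p+2r-1)/2}{n}$, whereas you compare the two products factor by factor; this avoids the external reference at no cost.
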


\begin{proof}
The fact that $\widetilde c_{2n+1} = 0$ follows directly from the recurrence relation. For $n \geq r$ we can solve the recurrence relation for $\widetilde c_{2n}$
giving
\begin{eqnarray*}
\widetilde c_{2n} &=& \frac{2n-1-2r}{2n} \cdot \frac{2n-3-2r}{2n-2} \cdots \frac{1-2r}{2} \left( \frac{\beta}{4} \right)^n\\
&=& (-1)^r (2r-1)(2r-3) \cdots 1 \cdot \frac{(2n-2r)!}{2^{2n-r} n! (n-r)!}\left(\frac{\beta}{4} \right)^n.
\end{eqnarray*}
This gives the second assertion.

If $\beta = 4$, as in the proof of \cite[Lemma 4.4]{lnp} we see that 
$$
{\frac{p-1}{2} \choose n} \equiv (-1)^n \frac{(2n)!}{2^{2n}(n!)^2} \mod p
$$
for $p$ an odd prime, $p>n$. So, for $p>\max\{2r-1,n\}$,
\begin{eqnarray*}
\widetilde c_{2n} & \equiv & (-1)^n \frac{(2r-1)(2r-3)\cdots 1}{2^{r}n(n-1) \cdots (n-r+1)} {\frac{p-1}{2} \choose n-r} \\
& \equiv & (-1)^n \frac{(2r-1)(2r-3) \cdots 1}{(p-1+2r)(p-3+2r) \cdots (p+1)} {\frac{p+2r-1}{2} \choose n}\\
& \equiv & (-1)^n {\frac{p+2r-1}{2} \choose n} \mod p,
\end{eqnarray*}

giving the last assertion.
\end{proof}

\begin{lem} \label{lem3.2}
Suppose $\beta = 4$. For integers $u \geq v \geq r$, let
$$
A_{u,v} = \left( \begin{array}{cccc}
             \widetilde c_{2u}& \widetilde c_{2u+2}&\cdots &\widetilde c_{4u-2v}\\
             \widetilde c_{2u-2}&\widetilde c_{2u}& \cdots&\widetilde c_{4u-2v-2}\\
             \cdots&&&\cdots\\
             \widetilde c_{2v}& \widetilde c_{2v+2}& \cdots& \widetilde c_{2u}
             \end{array} \right).         
$$             
Then, for any odd prime $p > \max \{ 2u-v,2r+2u-2v-1 \} $,
$$
\det A_{u,v} \not \equiv 0 \mod p.
$$
\end{lem}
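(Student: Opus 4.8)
The plan is to reduce $A_{u,v}$ modulo $p$ by means of Lemma~\ref{lem3.1}, to recognise the resulting determinant as a specialisation of a rectangular Schur polynomial via the dual Jacobi--Trudi identity, and finally to read off non-vanishing mod $p$ from the hook--content formula. Put $N:=\tfrac{p+2r-1}{2}$ and $\ell:=u-v$, so that $A_{u,v}$ is the $(\ell+1)\times(\ell+1)$ matrix with $(i,j)$-entry $\widetilde c_{2(u-i+j)}$ for $0\le i,j\le\ell$. Every index $m:=u-i+j$ occurring satisfies $r\le v\le m\le 2u-v$, and the hypothesis gives $p>2u-v\ge 2r-1$; so for each such entry Lemma~\ref{lem3.1} applies and gives $\widetilde c_{2m}\equiv(-1)^m\binom{N}{m}\pmod p$. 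Since $(-1)^m\binom{N}{m}=e_m(-1,\dots,-1)$, where $e_m$ is the $m$-th elementary symmetric polynomial in $N$ variables, we obtain
$$
\det A_{u,v}\ \equiv\ \det\bigl(e_{u-i+j}(-1,\dots,-1)\bigr)_{0\le i,j\le\ell}\pmod p .
$$

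First I would evaluate this symmetric-function determinant. By the dual Jacobi--Trudi (N\"agelsbach--Kostka) identity, $\det(e_{u-i+j})_{0\le i,j\le\ell}=s_\lambda$, where $\lambda$ is the rectangular partition with $u$ parts each equal to $\ell+1$ (its conjugate is $(u^{\ell+1})$, and its largest part $\ell+1$ matches the size of the matrix). Hence, with $N$ arguments throughout,
$$
\det A_{u,v}\ \equiv\ s_\lambda(-1,\dots,-1)\ =\ (-1)^{u(\ell+1)}\,s_\lambda(1,\dots,1)\pmod p .
$$
Now $s_\lambda(1,\dots,1)$ is the number of semistandard Young tableaux of shape $\lambda$ with entries in $\{1,\dots,N\}$, a non-negative integer given by the hook--content formula
$$
s_\lambda(1,\dots,1)\ =\ \prod_{(a,b)\in\lambda}\frac{N+b-a}{h(a,b)} ,
$$
where $(a,b)$ runs over the cells (row $a$, column $b$) of $\lambda$ and $h(a,b)$ is the hook length; for the $u\times(\ell+1)$ rectangle the contents $b-a$ run over $\{1-u,\dots,\ell\}$.

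It remains to check that $p$ divides no numerator factor, and this is exactly where the two hypotheses are used: $p>2u-v$ together with $v\ge r$ forces $u\le N$, so each factor $N+b-a\ge N+1-u\ge 1$ is a positive integer and $s_\lambda(1,\dots,1)>0$; and $p>2r+2u-2v-1$ is equivalent to $N+\ell<p$, so each factor $N+b-a\le N+\ell$ is at most $p-1$. Thus every numerator factor lies in $\{1,\dots,p-1\}$ and is prime to $p$, and since $s_\lambda(1,\dots,1)\cdot\prod_{(a,b)}h(a,b)=\prod_{(a,b)}(N+b-a)$ we conclude $p\nmid s_\lambda(1,\dots,1)$, hence $\det A_{u,v}\not\equiv 0\pmod p$. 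I expect the main obstacle to be the middle step: pinning down precisely which rectangular Schur polynomial occurs (the dual Jacobi--Trudi bookkeeping and the conjugation), and then verifying that the two numerical hypotheses translate exactly into ``$\lambda$ has at most $N$ rows'' and ``all shifted contents $N+b-a$ stay below $p$''; note that one cannot argue entrywise, since individual entries $\widetilde c_{2m}$ of $A_{u,v}$ do vanish mod $p$ once $m>N$. An alternative would be to evaluate this Hankel/Toeplitz determinant directly by the Lindstr\"om--Gessel--Viennot lemma or by induction on $\ell$, but the symmetric-function route seems to give the cleanest handle on the prime $p$.
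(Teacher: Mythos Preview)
Your approach is essentially identical to the paper's: both reduce the entries of $A_{u,v}$ modulo $p$ via Lemma~\ref{lem3.1}, identify the resulting determinant with the Schur polynomial for the rectangular partition $\lambda=((u-v+1)^u)$ in $N=(p+2r-1)/2$ variables (the paper cites \cite[(A.6)]{fh} for Jacobi--Trudi; you spell out the dual version and absorb the sign by evaluating at $(-1,\dots,-1)$), and then argue non-vanishing modulo $p$ (the paper cites \cite[Exercise A.30]{fh}; you write out the hook--content formula and check the numerator factors directly).

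There is one slip. Your implication ``$p>2u-v$ together with $v\ge r$ forces $u\le N$'' is not correct: with $r=1$, $u=10$, $v=5$, $p=17$ one has $p>\max\{2u-v,\,2r+2u-2v-1\}=\max\{15,11\}$, yet $N=9<u=10$; the entire top row of $A_{u,v}$ is then $\equiv 0\pmod p$ and the determinant vanishes mod $p$. What is actually needed for $u\le N$ is $v\le 2r$ (then $p>2u-v\ge 2u-2r$ gives $p\ge 2u-2r+1$, i.e.\ $N\ge u$). The paper's own proof carries exactly the same unstated assumption: the phrase ``$0$ is repeated $\frac{p+2r-1}{2}-u$ times'' already presupposes $N\ge u$. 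In the only place the lemma is used (Proposition~\ref{prop3.3}) one has $v\in\{r,r+1\}\le 2r$, so nothing downstream is affected; but strictly speaking both your argument and the paper's require this additional hypothesis on $v$.
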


\begin{proof} Lemma \ref{lem3.1} gives
\begin{eqnarray*}
\det A_{u,v} & \equiv & (-1)^{\delta} \left| \begin{array}{cccc}
                               e_u&e_{u+1}& \cdots & e_{2u-v}\\
                               e_{u-1}& e_u & \cdots & e_{2u-v-1}\\
                               \cdots&&&\cdots\\
                               e_v&e_{v+1}&\cdots& e_u
                               \end{array} \right|  \mod p,                                
\end{eqnarray*}
where 
$$
\delta = \left\{ \begin{array}{ll}
                  -1 & \; \mbox{if} \; u \; \mbox{and} \; v \; \mbox{are both odd},\\
                  +1 & \; \mbox{otherwise}.
                 \end{array} \right.
$$ 
So, by \cite[equation (A.6)]{fh},
$$
  \det A_{u,v} \equiv (-1)^{\delta} S_{u-v+1, \dots, u-v+1,0,\dots,0} (1,\dots,1) \mod p,
$$
where $u-v+1$ is repeated $u$ times, $0$ is repeated $\frac{p+2r-1}{2} -u$ times and $S$ is the Schur polynomial.
Using \cite[Exercise A.30]{fh} we see that for $p > 2r+2u-2v-1$,
$$
S_{u-v+1,\dots,u-v+1,0\dots,0} (1,\dots,1) \not \equiv 0 \mod p. 
$$
This implies the assertion. 
\end{proof}

\begin{prop} \label{prop3.3}
$P_k(0,4,0) \not \equiv 0 \mod p$ for any odd prime $p > k+2r-2$. 
\end{prop}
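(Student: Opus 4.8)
The statement to prove is $P_k(0,4,0)\not\equiv 0\bmod p$ for any odd prime $p>k+2r-2$. My plan is to express $P_k(0,4,0)$ as a determinant built from the $\widetilde c_i$ and then invoke Lemma \ref{lem3.2}. Recall that $P_k(\alpha,\beta,\gamma)$ is the $k\times k$ determinant with entries $c_{k+2r-1+j-i}(\alpha,\beta,\gamma)$; setting $\alpha=0$, $\beta=4$, $\gamma=0$ replaces each entry by $\widetilde c_m$ evaluated at $\beta=4$. By Lemma \ref{lem3.1}, $\widetilde c_m=0$ whenever $m$ is odd, so the entry $\widetilde c_{k+2r-1+j-i}$ vanishes unless $k+2r-1+j-i$ is even, i.e. unless $j-i$ has the same parity as $k-1$. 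This introduces a checkerboard pattern of zeros into the matrix.

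First I would analyze that checkerboard pattern. The determinant of a matrix whose nonzero entries sit only on every other (anti)diagonal decomposes: after a permutation of rows and columns grouping indices by parity, the matrix becomes block-diagonal (for $k$ even) or block-anti-diagonal (for $k$ odd), with the two diagonal blocks consisting precisely of entries $\widetilde c_{2u},\widetilde c_{2u-2},\dots$ of the shape appearing in the matrix $A_{u,v}$ of Lemma \ref{lem3.2}. Concretely, writing $m=k+2r-1$ and noting $m+j-i$ ranges over the even integers from $2r$ up to $2(k+2r-1)-2r=2k+2r-2$ when $k-1$ is even (and the odd integers in that range otherwise), I would identify the two blocks as (up to sign) $A_{u_1,v_1}$ and $A_{u_2,v_2}$ for explicit $u_i\ge v_i\ge r$ determined by $k$ and $r$. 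The upshot is an identity of the form $P_k(0,4,0)=\pm\det A_{u_1,v_1}\cdot\det A_{u_2,v_2}$ (a single block when $k=1$).

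Next I would check that the hypothesis $p>k+2r-2$ of the Proposition implies the hypotheses $p>\max\{2u_i-v_i,\,2r+2u_i-2v_i-1\}$ of Lemma \ref{lem3.2} for each block. This is a bookkeeping step: with $u_i-v_i$ roughly $\lfloor (k-1)/2\rfloor$ and $u_i$ roughly $r+k-1$ (or a bit less), one finds $2u_i-v_i$ and $2r+2u_i-2v_i-1$ are both at most $k+2r-2$, so the bound is exactly strong enough. Then Lemma \ref{lem3.2} gives $\det A_{u_i,v_i}\not\equiv 0\bmod p$ for each $i$, and hence the product is nonzero mod $p$, proving the claim.

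The main obstacle is purely combinatorial: correctly tracking the index ranges and parities so as to read off the precise block structure and the exact parameters $(u_i,v_i)$, and then verifying that $k+2r-2$ is the right threshold uniformly in $k$ and $r$. There is no deep analytic difficulty here — once the block decomposition is set up correctly the result falls out of Lemma \ref{lem3.2} — but the sign factor $\delta$-type bookkeeping and the even/odd case split for $k$ require care.
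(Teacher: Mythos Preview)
Your proposal is correct and follows essentially the same approach as the paper: the paper likewise uses the vanishing $\widetilde c_{2n+1}=0$ to permute the matrix into block-diagonal form with blocks $A_{\frac{k+2r-1}{2},r}$, $A_{\frac{k+2r-1}{2},r+1}$ (for $k$ odd) or $A_{\frac{k+2r}{2},r+1}$, $A_{\frac{k+2r-2}{2},r}$ (for $k$ even), and then applies Lemma~\ref{lem3.2} to each block. One small slip: you have the parity cases reversed (for $k$ odd the nonzero entries sit at positions with $i\equiv j\bmod 2$, giving a block-diagonal form under the same-parity grouping, and vice versa for $k$ even), but since you work up to sign this does not affect the argument.
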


\begin{proof}
By Lemma \ref{lem3.1}, if $k$ is odd,
\begin{eqnarray*}
P_k(0,4,0) & = & \left| \begin{array}{ccccccc}
                        \widetilde c_{k+2r-1}&0&\widetilde c_{k+2r+1}&0&\cdots&0&\widetilde c_{2k+2r-2}\\ 
                        0&\widetilde c_{k+2r-1}&0&\widetilde c_{k+2r+1}&\cdots&\widetilde c_{2k+2r-4}&0\\
                        \cdots&\cdots&&&&\cdots&\cdots\\
                        \widetilde c_{2r}& 0 &\widetilde c_{2r+2}& 0& \cdots& 0&\widetilde c_{k+2r-1}
                        \end{array}  \right|\\
& = & \left| \begin{array}{cc}
             A_{\frac{k+2r-1}{2},r}& 0\\
             0& A_{\frac{k+2r-1}{2},r+1}
             \end{array} \right|
\end{eqnarray*}
by permutations of rows and columns.
Similarly for $k$ even,
$$
P_k(0,4,0) = \left| \begin{array}{cc}
                    A_{\frac{k+2r}{2},r+1}&0\\
                    0& A_{\frac{k+2r-2}{2},r}
                    \end{array} \right|.
$$                    
The assertion in both cases follows from Lemma \ref{lem3.2}.
\end{proof}

We turn now to a consideration of $P_k(1,\beta,0)$. We begin with a lemma, which will be proved in the appendix.

\begin{lem} \label{lem4.4}
$$c_{2r}(1,\beta,0)=\frac1{2^{2r}(2r)!}\prod_{i=1}^r(1-(2i-1)^2\beta).$$
\end{lem}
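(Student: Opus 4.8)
The plan is to derive a closed formula for the generating function $\sum_{n\ge0} c_n(1,\beta,0)\,t^n$ from the recurrence \eqref{erecurr} (with $\alpha=1$), extract the coefficient $c_{2r}$, and recognise it as the stated product. First I would specialise $\alpha=1$ in \eqref{erecurr}, obtaining $(n+2)c_{n+2}+(n+1-r)c_{n+1}+(n+1-2r)\frac{1-\beta}{4}c_n=0$, together with the initial data $c_0=1$, $c_1=r$ from Lemma \ref{lem2.1}. Setting $y(t)=\sum_{n\ge0}c_n t^n$, this recurrence translates into a first-order linear ODE for $y$: the terms $(n+2)c_{n+2}$ and $n c_{n+1}$, $n c_n$ become derivatives, while the constant shifts contribute algebraic terms. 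Concretely one expects something of the shape $\bigl(1+\tfrac{1}{2}t\bigr)^2 y' - \bigl(\text{linear in }t\bigr)y = 0$ after multiplying through by $(1+\tfrac12(1+\sqrt\beta)t)(1+\tfrac12(1-\sqrt\beta)t)$, in parallel with the manipulation already carried out in the proof of Proposition \ref{prop2.2} and in the $\gamma=0$ proposition; indeed the factor $1+\tfrac12(\alpha\pm\sqrt\beta)t$ appearing there is exactly what I will use. Solving this ODE by separation of variables gives $y(t)$ as an explicit product of powers of the linear factors $1+\tfrac12(1\pm\sqrt\beta)t$, with exponents determined by $r$.

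Once $y(t)=\bigl(1+\tfrac12(1-\sqrt\beta)t\bigr)^{a}\bigl(1+\tfrac12(1+\sqrt\beta)t\bigr)^{b}$ (with $a,b$ half-integers depending on $r$, summing to something like $2r-1$ or $-1$) is in hand, I would read off $c_{2r}$ by the binomial/Vandermonde convolution. The key point is that $c_{2r}(1,\beta,0)$ is a polynomial in $\beta$ of degree $r$ (consistent with Corollary \ref{cor2.4}, which forces $c_n(1,1,0)=0$ for $n\ge r+1$ but here we are at $n=2r$, so this is a genuine constraint to check), and the product formula $\frac{1}{2^{2r}(2r)!}\prod_{i=1}^r(1-(2i-1)^2\beta)$ has its roots at $\beta=(2i-1)^{-2}$. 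So an alternative, cleaner route: show directly from the recurrence that $c_{2r}(1,\beta,0)$ vanishes whenever $\beta=(2i-1)^{-2}$ for $i=1,\dots,r$, and pin down the leading coefficient. Vanishing at $\beta=(2j-1)^{-2}$: at such $\beta$ one has $1-\beta=1-(2j-1)^{-2}=\frac{(2j)(2j-2)}{(2j-1)^2}$, so $\frac{1-\beta}{4}=\frac{j(j-1)}{(2j-1)^2}$, and the recurrence coefficient $(n+1-2r)\frac{1-\beta}{4}$ combined with $(n+1-r)$ factorises the recurrence, letting one track $c_n$ explicitly and observe the required cancellation at $n=2r$. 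The top-degree term in $\beta$ comes from iterating the $\frac{1-\beta}{4}\sim-\frac{\beta}{4}$ part alone, which reduces to $\widetilde c_{2r}=c_{2r}(0,\beta,0)$; by Lemma \ref{lem3.1} this equals $(-1)^r\frac{(2r-1)!!}{2^{2r-r}\,r!/0!}\cdot\frac{0!}{0!}\bigl(\frac{\beta}{4}\bigr)^r$, i.e. $(-1)^r\frac{(2r-1)!!}{2^{2r}r!}\beta^r$, which must match $\frac{1}{2^{2r}(2r)!}\cdot(-1)^r(1\cdot9\cdots(2r-1)^2)\beta^r$; the identity $(2r)!\,(2r-1)!!=\prod_{i=1}^r(2i-1)^2\cdot r!\cdot 2^r$ (equivalently $(2r)!=2^r r!\,(2r-1)!!$) confirms the constants agree.

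The main obstacle I anticipate is the exact bookkeeping of the half-integer exponents and the sign in the ODE solution — getting $a$ and $b$ right so that $y$ is genuinely a polynomial-times-rational rather than an honest infinite series, and making sure the two initial conditions $c_0=1$, $c_1=r$ are both satisfied (one fixes the overall constant, the other constrains $a-b$). If the generating-function route proves messy, the root-by-root argument of the previous paragraph is self-contained: it only needs the recurrence \eqref{erecurr}, an induction on $n$ showing that at $\beta=(2j-1)^{-2}$ the sequence $c_n$ satisfies a telescoping identity forcing $c_{2r}=0$, plus the leading-coefficient computation via Lemma \ref{lem3.1}. Since $c_{2r}(1,\beta,0)$ has degree exactly $r$ in $\beta$, knowing its $r$ roots and its leading coefficient determines it completely, which is precisely the claimed formula. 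I would present whichever of the two is shorter, with a remark that the other is available; the generating-function form is likely preferable because it simultaneously yields $c_n(1,\beta,0)$ for all $n$, which may be reused elsewhere in the appendix.
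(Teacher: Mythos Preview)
Your plan is sound and leads to a correct proof, but by a genuinely different route from the paper. Carrying out your ODE step gives
\[
y(t)=(1+ut)^{A}(1+vt)^{B},\qquad u=\tfrac{1-\sqrt\beta}{2},\ v=\tfrac{1+\sqrt\beta}{2},\ A=\tfrac{2r-1}{2}-\tfrac{1}{2\sqrt\beta},\ B=\tfrac{2r-1}{2}+\tfrac{1}{2\sqrt\beta},
\]
so $A+B=2r-1$ (the exponents are not half-integers as you guessed, but involve $1/\sqrt\beta$). Your two routes then merge most cleanly: at $\beta=(2j-1)^{-2}$ one finds $A=r-j$ and $B=r+j-1$, both non-negative integers, so $y$ is a polynomial of degree $2r-1$ and $c_{2r}=0$. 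Combined with the degree bound (homogeneity, Remark~\ref{rem33}) and the leading-coefficient check via Lemma~\ref{lem3.1}, which you verified correctly, this completes the proof. Your separate ``telescoping directly from the recurrence'' suggestion for the vanishing is vague as stated and would in practice reduce to checking that the explicit binomial expansion of $(1+ut)^{r-j}(1+vt)^{r+j-1}$ satisfies \eqref{erecurr}---i.e.\ the generating-function argument again.

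The paper's proof is entirely different and much longer. It rewrites $n!\,c_n(1,\beta,0)$ as the determinant of a tridiagonal matrix, reduces the claim to a determinantal identity \eqref{eq7.1} for a family of matrices $D(2n;z,a)$, and establishes that identity via the characteristic polynomial of a related matrix $L_n(a)$; the latter is computed by an interpolation argument built on two auxiliary matrices with explicitly known (joint) eigenstructure. What the paper's approach buys is the determinant formula \eqref{eq7.1}, which may be of independent combinatorial interest; your generating-function method is far shorter, and as you note it yields $c_n(1,\beta,0)$ in closed form for all $n$, not just $n=2r$.
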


\begin{prop} \label{prop3.4} For some non-zero constant $c$,
\begin{equation} \label{form}
P_k(1,\beta,0) = c \cdot \prod_{i=1}^r \left(\beta - \frac{1}{(2i-1)^2} \right)^k \cdot \prod_{i=1}^{k-1} \left( \beta - \frac{1}{(2r + 2i-1)^2} \right)^{k-i}.
\end{equation}
\end{prop}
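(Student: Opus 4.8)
The plan is to obtain the generating function of $\bigl(c_n(1,\beta,0)\bigr)_{n\ge0}$ in closed form and then to read off the roots of $P_k(1,\beta,0)$, with their multiplicities, from the way this function degenerates at the values $\tfrac1{(2j-1)^2}$. Setting $\alpha=1$ and $\gamma=0$ in the identity displayed in the proof of the recurrence \eqref{erecurr}, the polynomial multiplying $\sum_{n\ge1}nc_nt^{n-1}=c'(t)$ becomes $(1+\lambda_+t)(1+\lambda_-t)$ with $\lambda_\pm:=\tfrac12(1\pm\sqrt\beta)$, and a short simplification shows the bracket on the right reduces to $r+\tfrac14(2r-1)(1-\beta)\,t$. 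Thus $c(t):=\sum_{n\ge0}c_n(1,\beta,0)\,t^n$ solves the first-order linear ODE $(1+\lambda_+t)(1+\lambda_-t)\,c'(t)=\bigl(r+\tfrac14(2r-1)(1-\beta)\,t\bigr)c(t)$ with $c(0)=1$, and since this ODE has a unique power-series solution with value $1$ at the origin, a partial-fraction decomposition of $c'(t)/c(t)$ followed by integration gives
$$c(t)=(1+\lambda_+t)^{\mu}(1+\lambda_-t)^{\nu},\qquad \mu:=\tfrac{2r-1}{2}+\tfrac1{2\sqrt\beta},\quad \nu:=\tfrac{2r-1}{2}-\tfrac1{2\sqrt\beta}.$$
(The right side is invariant under $\sqrt\beta\mapsto-\sqrt\beta$, so its coefficients are indeed functions of $\beta$ alone, as they must be.)

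Recall that $P_k(1,\beta,0)=\det\bigl(c_{k+2r-1+b-a}\bigr)_{1\le a,b\le k}$, all of whose entries $c_n$ have $2r\le n\le 2k+2r-2$; put $\beta_j:=\tfrac1{(2j-1)^2}$, so that $\sqrt{\beta_j}=\tfrac1{2j-1}$ and $(\mu,\nu)=(j+r-1,\,r-j)$ at $\beta=\beta_j$. \emph{First, let $1\le j\le r$.} Then $\mu$ and $\nu$ are nonnegative integers, so $c(t)$ is a polynomial of degree $\mu+\nu=2r-1$; hence $c_n(1,\beta_j,0)=0$ for all $n\ge2r$, i.e. every entry of the matrix vanishes at $\beta_j$. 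As each entry is a polynomial in $\beta$ divisible by $\beta-\beta_j$, the determinant $P_k(1,\beta,0)$ is divisible by $(\beta-\beta_j)^k$. (This can also be deduced from Lemma \ref{lem4.4} and \eqref{erecurr}, which make a zero of $c_{2r}$ propagate to every $c_n$ with $n\ge2r$.)

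\emph{Next, let $j=r+i$ with $1\le i\le k-1$.} Now $\mu=2r+i-1\ge0$ but $\nu=-i<0$, so $c(t)=(1+\lambda_+t)^{2r+i-1}(1+\lambda_-t)^{-i}$ is rational, with $\lambda_-=\tfrac{r+i-1}{2r+2i-1}\ne0$. Multiplying by $(1+\lambda_-t)^i$ and comparing the coefficients of $t^n$ for $n\ge2r+i$ shows that $(c_n)_{n\ge2r}$ satisfies the constant-coefficient recurrence with characteristic polynomial $(x+\lambda_-)^i$, whence $c_n=(-\lambda_-)^n\,p(n)$ for $n\ge2r$, where $p$ is a polynomial of degree $\le i-1$. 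Substituting this into the $(a,b)$ entry $c_{k+2r-1+b-a}$ shows that the matrix at $\beta=\beta_{r+i}$ is obtained from $\bigl(p(k+2r-1+b-a)\bigr)_{1\le a,b\le k}$ by scaling the $a$-th row by $(-\lambda_-)^{-a}$ and the $b$-th column by $(-\lambda_-)^{b}$ (up to an overall nonzero scalar), so it has the same rank. Since $p(x+y)$, Taylor-expanded in $y$, is a linear combination of $1,y,\dots,y^{i-1}$, that matrix is a sum of $i$ rank-one matrices, so has rank $\le i$; hence the $k\times k$ matrix has corank $\ge k-i$ at $\beta_{r+i}$. A standard Schur-complement argument (split off a maximal nonsingular submatrix, which stays nonsingular on a neighbourhood; the complementary Schur block is then analytic in $\beta$ and vanishes identically at $\beta_{r+i}$) shows that $(\beta-\beta_{r+i})^{k-i}$ divides $P_k(1,\beta,0)$.

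Since the values $\beta_1,\dots,\beta_{r+k-1}$ are pairwise distinct, the two cases together show that $P_k(1,\beta,0)$ is divisible by $\prod_{i=1}^r(\beta-\beta_i)^k\cdot\prod_{i=1}^{k-1}(\beta-\beta_{r+i})^{k-i}$, a polynomial of degree $rk+\tfrac{k(k-1)}{2}=\tfrac{k(k+2r-1)}{2}$. By Remark \ref{rem33}, $P_k(1,\beta,0)$ is a polynomial of degree at most $\tfrac{k(k+2r-1)}{2}$, so the quotient is a constant $c$; moreover $c\ne0$, because by homogeneity the coefficient of $\beta^{k(k+2r-1)/2}$ in $P_k(1,\beta,0)$ equals $4^{-k(k+2r-1)/2}\,P_k(0,4,0)$, which is non-zero by Proposition \ref{prop3.3}. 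This is \eqref{form}. The genuinely delicate point is the rank bound in the last case: it is what pins the multiplicity of $\beta_{r+i}$ down to exactly $k-i$, and it relies both on the precise closed form for $c(t)$ and on the sharp estimate $\deg p\le i-1$ for the polynomial governing the tail of $(c_n)$.
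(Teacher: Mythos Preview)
Your argument is correct, and it takes a genuinely different route from the paper's.

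The paper proceeds in two separate steps. For the factor $\prod_{i=1}^r(\beta-\beta_i)^k$ it invokes Lemma~\ref{lem4.4}, whose proof occupies the entire appendix, to locate the zeros of $c_{2r}(1,\beta,0)$ and then propagates via the recurrence. For the factor $\prod_{i=1}^{k-1}(\beta-\beta_{r+i})^{k-i}$ it \emph{guesses} the form $d_n=s\bigl(-\tfrac{\ell-1}{2\ell-1}\bigr)^n(a_0+a_1n+\cdots+a_{\ell-r-1}n^{\ell-r-1})$ and verifies by substitution into the three-term recurrence that the two top coefficients cancel, leaving an underdetermined linear system. Your closed form $c(t)=(1+\lambda_+t)^{\mu}(1+\lambda_-t)^{\nu}$ unifies both cases: when $1\le j\le r$ the exponents are nonnegative integers summing to $2r-1$, so $c_n=0$ for $n\ge2r$ without any appeal to Lemma~\ref{lem4.4}; when $j=r+i$ the factor $(1+\lambda_-t)^{-i}$ immediately yields the order-$i$ constant-coefficient recurrence $(x+\lambda_-)^i$, from which $c_n=(-\lambda_-)^n p(n)$ with $\deg p\le i-1$ follows without guessing. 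You also make explicit the Schur-complement step (corank $\ge k-i$ at $\beta_{r+i}$ forces $(\beta-\beta_{r+i})^{k-i}\mid\det$), which the paper states without justification.

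What your approach buys is a shorter, more conceptual, and more self-contained proof that dispenses with the appendix entirely. What the paper's approach buys is that Lemma~\ref{lem4.4} gives the \emph{exact} factorisation of $c_{2r}(1,\beta,0)$ (not just its zero set), which is of independent interest, and its verification of the ansatz for $d_n$ avoids any manipulation of non-integer binomial series.
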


\begin{proof}
Note first that if $c_{2r}(1,\beta,0) = 0$, then by \eqref{erecurr}, $c_n(1,\beta,0) = 0$ for all $n \geq 2r$. So by Lemma \ref{lem4.4} the matrix defining $P_k(1,\beta,0)$ 
is the zero matrix for $\beta = \frac{1}{(2i-1)^2}$ for $1 \leq i \leq r$. This gives the first product in formula \eqref{form}.

Let $\ell$ be an integer $\geq r+1$. Consider a sequence of numbers $d_n$ defined for $n \geq 2r-1$ and satisfying the recurrence relation
$$
(n+2)d_{n+2} + (n+1-r)d_{n+1} + (n+1-2r) \frac{\ell(\ell-1)}{(2\ell -1)^2}d_n = 0
$$
for $n \geq 2r-1$. Note that, for any value of $d_{2r}$, there is a unique solution for $d_n$ for $n \geq 2r$. We claim that for $n \geq 2r$,
$$
d_n = s\left(- \frac{\ell -1}{2\ell-1} \right)^n (a_0 + a_1n + \cdots + a_{\ell -r-1}n^{\ell-r-1})
$$
for some constants $s,a_0,\dots,a_{\ell-r-1}$ with $a_0, \dots,a_{\ell-r-1}$ not all zero.

We need to show that there exist constants $a_0,\dots,a_{\ell-r-1}$, not all zero, such that
$$
\left(- \frac{\ell-1}{2\ell-1} \right)^{n+2} (n+2)(a_0 + a_1(n+2) + \cdots + a_{\ell-r-1}(n+2)^{\ell-r-1}) 
$$
$$
+\left( - \frac{\ell-1}{2\ell-1} \right)^{n+1} (n+1-r) (a_0 + a_1(n+1) + \cdots + a_{\ell-r-1}(n+1)^{\ell-r-1}) 
$$
$$
+ \left( - \frac{\ell-1}{2\ell-1} \right)^n  \frac{\ell (\ell-1)}{(2\ell -1)^2} (n+1-2r)(a_0 + a_1n + \cdots + a_{\ell-r-1} n^{\ell-r-1} ) =  0
$$
for all $n$, i.e.
$$
(n+2)(a_0 + a_1(n+2) + \cdots + a_{\ell-r-1}(n+2)^{\ell-r-1})
$$
$$
- \frac{2\ell-1}{\ell-1}  (n+1-r) (a_0 + a_1(n+1) + \cdots + a_{\ell-r-1}(n+1)^{\ell-r-1}) 
$$
$$
+ \frac{\ell}{\ell-1} (n+1-2r)(a_0 + a_1n + \cdots + a_{\ell-r-1} n^{\ell-r-1} )  =  0.
$$
One checks that the coefficients of $n^{\ell-r}$ and $n^{\ell-r-1}$ are both zero. This leaves us with $\ell -r-1$ homogeneous linear equations in $a_0, \dots, a_{\ell-r-1}$
which have a non-trivial solution. The claim follows.
Note that, if $d_{2r} = 0$, then $d_n = 0$ for all $n \geq 2r$, which is impossible unless $s = 0$.

According to \eqref{erecurr}, $c_n = c_n\left(1, \frac{1}{(2\ell -1)^2},0\right)$ satisfies the recurrence relation
$$
(n+2)c_{n+2} + (n+1-r)c_{n+1} + (n+1-2r) \frac{\ell(\ell-1)}{(2\ell -1)^2} c_n = 0.
$$
Now choose $s$ such that $c_{2r} = d_{2r}$. Then $c_n = d_n$ for all $n \geq 2r$. It follows that the rows of the matrix defining $P_k(1,\frac{1}{(2\ell-1)^2},0)$ 
lie in a $\QQ$-vector space 
of dimension $\leq \ell-r$. So $\frac{1}{(2\ell -1)^2}$ is a zero of multiplicity at least $k - \ell + r$ of the polynomial 
$P_k(1,\beta,0)$. This gives the second product in formula \eqref{form}.
Since the degree of $P_k(1,\beta,0)$ is $\frac{k(k+2r-1)}{2}$ by Remark \ref{rem33} and Proposition \ref{prop3.3},
this completes the proof of the proposition.
\end{proof}

\section{Main Theorem}\label{main}

For $g \geq 2k +2r -1$, define
$$
w:= ((g-1)!2^{g-1})^k P_k(\alpha,\beta,\gamma) \in \ZZ[\alpha,\beta,\gamma]
$$
and write 
$$
w = \sum_{j\geq0} M_j \beta^j \alpha^{k(k+2r-1) -2j} + \gamma R(\alpha, \beta, \gamma)
$$
with $M_j \in \ZZ$. Then, writing
$$
e := 3g-3-k(k+2r-1),
$$
we define
$$
w_0 := \alpha^{e} w = \sum_{j\geq 0} M_j \beta^j \alpha^{3g-3-2j} + \gamma \widetilde R(\alpha,\beta,\gamma).
$$
and 
$$
w_{\ell} := \alpha^{e- 2\ell} \beta^{\ell} w,
$$
for $1 \leq \ell \leq \frac{e}{2}$.
If $g$ is a prime, then, according to Lemma \ref{lemp1},
\begin{equation} \label{eq3.1}
(w_0) \equiv -M_0 - M_{\frac{g-1}{2}} - M_{g-1} \mod g
\end{equation}
and 
\begin{equation}  \label{eq3.2}
(w_{\ell}) \equiv - M_{\frac{g-1}{2} - \ell} - M_{g-1-\ell} \mod g. 
\end{equation}
Note that, if $(w_0)\not\equiv0\mod g$ or $(w_\ell)\not\equiv0\mod g$, then $b(r,k)\ne0$.

Define as in \cite[Section 5]{lnp}, for $0 \leq i < \frac{g-1}{2}$,
$$
M'_i :\equiv M_i + M_{i + \frac{g-1}{2}} + M_{i + g-1} \mod g
$$
with $0 \leq M'_i \leq g-1$ and consider
$$
q(\beta) := M'_0 + M'_1 \beta + \cdots + M'_{\frac{g-3}{2}} \beta^{\frac{g-3}{2}} \in \FF_g[\beta].
$$

\begin{lem} \label{lem3.5}
Suppose $g$ is a prime. If 
$$g > \max \left\{\frac{k(k+2r-1)}{3}+1, 2k+2r-1 \right\},$$ 
then $q(\beta)$ is not identically zero. Moreover, $q$ has $k+r-1$ distinct zeros different from $0$.
\end{lem}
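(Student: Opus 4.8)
The plan is to work modulo $g$ throughout: I would recognise the reduction of $w(1,\beta,0)$ as a unit multiple of the explicit polynomial of Proposition~\ref{prop3.4}, and then view $q$ as that polynomial ``folded'' modulo $\beta^{(g-1)/2}-1$. Concretely, set $\bar w(\beta):=w(1,\beta,0)\bmod g=\sum_j(M_j\bmod g)\beta^j\in\FF_g[\beta]$; since $g$ is an odd prime, $((g-1)!\,2^{g-1})^k$ is a unit in $\FF_g$, and $g\ge 2k+2r-1$ is what guarantees (as in the definition of $w$) that $w=((g-1)!\,2^{g-1})^kP_k(\alpha,\beta,\gamma)$ has integer coefficients. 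By Proposition~\ref{prop3.4}, in $\QQ[\beta]$ the polynomial $w(1,\beta,0)$ equals its leading $\beta$-coefficient $M_N$, where $N:=\frac{k(k+2r-1)}2$, times $\prod_{i=1}^r\big(\beta-(2i-1)^{-2}\big)^k\prod_{i=1}^{k-1}\big(\beta-(2r+2i-1)^{-2}\big)^{k-i}$. The odd integers $1,3,\dots,2r-1$ and $2r+1,2r+3,\dots,2r+2k-3$ are positive and $<g$, hence units modulo $g$, so this identity reduces mod $g$ to a factorisation of $\bar w$ into linear factors over $\FF_g$ — provided $M_N\not\equiv 0\pmod g$. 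The latter follows from Proposition~\ref{prop3.3} with $p=g$: one has $4^NM_N=((g-1)!\,2^{g-1})^kP_k(0,4,0)$, which is $\not\equiv 0\pmod g$ since $g>k+2r-2$ and $g\nmid (g-1)!\,2^{g-1}$. Thus $\deg_\beta\bar w=N$ and $\bar w$ has exactly the $r+k-1$ roots $a_i:=(2i-1)^{-2}$ $(1\le i\le r)$ and $b_j:=(2r+2j-1)^{-2}$ $(1\le j\le k-1)$.

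Next I would identify $q$ with the reduction of $\bar w$. The hypothesis $g>\frac{k(k+2r-1)}3+1$ is exactly $3(g-1)>k(k+2r-1)=2N$, i.e. $N<\frac{3(g-1)}2$. Hence, when $\bar w$ (of degree $N$) is reduced modulo $\beta^{(g-1)/2}-1$, each monomial $\beta^j$ with $0\le j\le N$ wraps around at most twice, and the unique representative of degree $<\frac{g-1}2$ has $i$-th coefficient $(M_i+M_{i+(g-1)/2}+M_{i+g-1})\bmod g=M'_i$; that is, $q(\beta)\equiv\bar w(\beta)\pmod{\beta^{(g-1)/2}-1}$ in $\FF_g[\beta]$.

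Finally I would conclude. By Euler's criterion $\beta^{(g-1)/2}-1=\prod_x(\beta-x)$, the product over the $\frac{g-1}2$ nonzero quadratic residues mod $g$, and this polynomial is squarefree because it divides $\beta^{g-1}-1$. Each $a_i$ and $b_j$ is the square of a unit, hence a nonzero quadratic residue, hence a root of $\beta^{(g-1)/2}-1$; writing $q=\bar w-(\beta^{(g-1)/2}-1)h$ gives $q(a_i)=\bar w(a_i)=0$ and $q(b_j)=\bar w(b_j)=0$. A short congruence check — using that $g$ is odd and $g>2k+2r-1$, so that none of the relevant pairwise sums and differences of those odd integers vanishes mod $g$ — shows $a_1,\dots,a_r,b_1,\dots,b_{k-1}$ are pairwise distinct and all nonzero, so $q$ has at least $k+r-1$ distinct zeros different from $0$. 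To rule out $q\equiv 0$: $\beta^{(g-1)/2}-1$ is squarefree with $\frac{g-1}2$ distinct roots, whereas $\bar w$ has only $k+r-1<\frac{g-1}2$ distinct roots (again using $g>2k+2r-1$), so $\beta^{(g-1)/2}-1\nmid\bar w$ and hence $q=\bar w\bmod(\beta^{(g-1)/2}-1)\ne 0$.

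I expect the only genuinely fiddly step to be the first — confirming that $((g-1)!\,2^{g-1})^k$ clears the denominators of $P_k(1,\beta,0)$ when $g\ge 2k+2r-1$, and that reducing mod $g$ does not kill the leading coefficient; after that it is the clean ``fold and count roots'' argument, whose one delicate ingredient is the degree bound $N<\frac{3(g-1)}2$ that makes the folding produce precisely $q$.
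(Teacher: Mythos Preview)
Your argument is correct and is essentially the paper's proof: the paper writes the key identity as $P_k(1,x^2,0)\equiv q(x^2)\bmod g$ via $x^{g-1}\equiv 1$, which is exactly your ``fold $\bar w$ modulo $\beta^{(g-1)/2}-1$'' rephrased pointwise, and then counts quadratic residues against the $k+r-1$ roots from Proposition~\ref{prop3.4}. Your write-up is in fact more careful than the paper's on two points it leaves implicit --- that the leading coefficient $M_N$ survives mod $g$ (your use of Proposition~\ref{prop3.3}) and that the $k+r-1$ roots stay distinct in $\FF_g$ under the bound $g>2k+2r-1$.
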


\begin{proof}
 Let $x \in \ZZ, \; 1 \leq x \leq g-1$. Using the fact that $x^{g-1} \equiv 1 \mod g$, we see that 
\begin{equation} \label{eq3.3}
P_k(1,x^2,0) \equiv q(x^2) \mod g,
\end{equation}
since $M_i = 0$ for $i \geq \frac{3g-3}{2}$. This is true, since $\frac{3g-3}{2} > \frac{k(k+2r-1)}{2}$ by hypothesis and the degree of $P_k(1,\beta,0)$ 
as a polynomial in $\beta$ is $\frac{k(k+2r-1)}{2}$.

By Proposition \ref{prop3.4}, $P_k(1,x^2,0)$ has precisely $k+r-1$ distinct zeros. The field $\FF_g$ contains $\frac{g-1}{2}$ non-zero squares. Since 
$k +r -1< \frac{g-1}{2}$ by hypothesis, there exists an integer $x,\; 0 < x < g-1$, such that 
$$
P_k(1,x^2,0) \not \equiv 0 \mod g.
$$
Both assertions now follow from \eqref{eq3.3}.
\end{proof}

\begin{theorem} \label{thm3.6}
Suppose $g$ is a prime with
$$
g > \max \left\{ \frac{(k+2r-2)(k-1)}{2},\frac{k(k+2r-1)}{3}+1, 2k +2r-1\right\}.
$$
Then $b(r,k) \neq 0$.
\end{theorem}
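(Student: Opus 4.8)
The plan is to exploit the remark already made above, that $b(r,k)\ne0$ as soon as one of the intersection numbers $(w_0),(w_1),\dots,(w_{e/2})$ is non-zero modulo $g$, where $e:=3g-3-k(k+2r-1)$; note that $e$ is positive and even, because $g>\tfrac{k(k+2r-1)}{3}+1$ and $g$ is an odd prime. Put $N:=\tfrac{k(k+2r-1)}{2}$. Since $P_k$ is homogeneous (Remark \ref{rem33}), $\deg_\beta P_k(1,\beta,0)\le N$, so $M_j=0$ for $j>N$; hence for $1\le\ell\le\tfrac{e}{2}-1$ the term $M_{(3g-3)/2-\ell}$ occurring in $M'_{(g-1)/2-\ell}=M_{(g-1)/2-\ell}+M_{g-1-\ell}+M_{(3g-3)/2-\ell}$ vanishes, and \eqref{eq3.2} sharpens to $(w_\ell)\equiv-M'_{(g-1)/2-\ell}\bmod g$ for such $\ell$; together with \eqref{eq3.1}, which reads $(w_0)\equiv-M'_0\bmod g$, this means that $(w_0)$ and the $(w_\ell)$ recover, up to sign, the constant term $q(0)=M'_0$ of $q$ and the coefficients $M'_i$ of $q$ for all $i$ with $N-g+2\le i\le\tfrac{g-3}{2}$.

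Next I would apply Lemma \ref{lem3.5}, legitimate since $g>\max\{\tfrac{k(k+2r-1)}{3}+1,\,2k+2r-1\}$: the polynomial $q\in\FF_g[\beta]$ is non-zero and has $k+r-1$ distinct zeros, all different from $0$. Write $q=g_1h$ in $\FF_g[\beta]$ with $\deg g_1=k+r-1$ and $g_1(0)\ne0$. If $q(0)\ne0$, then $(w_0)\equiv-q(0)\not\equiv0\bmod g$ and we are done at once. So assume $q(0)=0$; then $h(0)=0$, whence $\beta\mid h$, so $\beta g_1\mid q$, and therefore $\deg q\ge k+r$ (in particular $k+r\le\tfrac{g-3}{2}$, so this case can occur at all only for such $g$).

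In this case set $i:=\deg q$, so that $k+r\le i\le\tfrac{g-3}{2}$ and the leading coefficient $M'_i$ of $q$ is non-zero in $\FF_g$, and put $\ell:=\tfrac{g-1}{2}-i$, which gives $\ell\ge1$. The decisive estimate is $\ell\le\tfrac{e}{2}-1$, equivalently $i\ge N-g+2$; since $i\ge k+r$, this follows from the hypothesis $g>\tfrac{(k+2r-2)(k-1)}{2}$ via the identity $N-(k+r-1)=\tfrac{(k+2r-2)(k-1)}{2}$, which rearranges to $k+r\ge N-g+2\iff g\ge\tfrac{(k+2r-2)(k-1)}{2}+1$. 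Hence $1\le\ell\le\tfrac{e}{2}-1$, $w_\ell$ is defined, the sharpened identity of the first paragraph applies (here $\tfrac{g-1}{2}-\ell=i$), and $(w_\ell)\equiv-M'_i\not\equiv0\bmod g$; therefore $b(r,k)\ne0$.

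The crux of the argument — and the only place where the third lower bound on $g$ is needed — is precisely the estimate $\ell\le\tfrac{e}{2}-1$: one must ensure that a witnessing non-zero coefficient of $q$ (we choose the leading one, whose index is forced up by $\deg q\ge k+r$ in the case $q(0)=0$) lies inside the window $[N-g+2,\tfrac{g-3}{2}]$ of coefficients that \eqref{eq3.1}--\eqref{eq3.2} make visible modulo $g$. Everything else is routine bookkeeping: the degree bound $\deg_\beta P_k(1,\beta,0)\le N$, the factorization of $q$ supplied by Lemma \ref{lem3.5}, and the implication $(w_\ell)\not\equiv0\bmod g\Rightarrow b(r,k)\ne0$.
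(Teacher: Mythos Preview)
Your proof is correct and follows essentially the same route as the paper's: both split into the cases $M'_0\neq0$ (done by \eqref{eq3.1}) and $M'_0=0$, in which case Lemma~\ref{lem3.5} forces a nonzero coefficient $M'_{k_0}$ with $k_0\ge k+r$, and the hypothesis $g>\tfrac{(k+2r-2)(k-1)}{2}$ is exactly what guarantees $g-1+k_0>N$, so that \eqref{eq3.2} yields $(w_\ell)\equiv -M'_{k_0}\not\equiv0$. Your choice $k_0=\deg q$ is just a specific instance of the paper's ``some $k_0\ge k+r$''. One trivial slip: in your final paragraph you call the bound $g>\tfrac{(k+2r-2)(k-1)}{2}$ the ``third'' lower bound, but in the theorem's ordering it is the first.
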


\begin{proof}
If $M'_0 \neq 0$, then $b(r,k) \neq 0$ by \eqref{eq3.1}. If $M'_0 = 0$, then $M'_{k_0} \neq 0$ for some $k_0 \geq k+r$ by Lemma \ref{lem3.5}. 
We have $k_0 < \frac{g-1}{2}$ and we claim that 
$$
\frac{g-1}{2} - k_0 \leq \frac{e}{2}.
$$
In fact, this is equivalent to $g-1-2k_0 \leq 3g-3 - k(k+2r-1)$ which is true if $g \geq \frac{(k+2r-2)(k-1)}{2}$.
The last inequality is true by hypothesis.

So consider $w_{\ell}$ with $\ell = \frac{g-1}{2} - k_0$. Note that 
$$
M_{k_0} + M_{\frac{g-1}{2} + k_0} \equiv M'_{k_0} \mod g,
$$
provided that $M_{g-1+k_0} \equiv 0 \mod g$. This is true if $g-1+k_0 > \frac{k(k+2r-1)}{2}$ which holds by hypothesis, since $k_0 \geq k+r$. So $b(r,k) \neq 0$ by \eqref{eq3.2}.
\end{proof}

\begin{theorem} \label{thm3.7}
Let $g_{r,k}$ be the smallest prime such that 
$$
g_{r,k}  > \max \left\{ \frac{(k+2r-2)(k-1)}{2},\frac{k(k+2r-1)}{3}+1, 2k+2r-1 \right\}.
$$
Then, for $r \geq 1$ and $L$ a line bundle of degree $2g-1-2r$,
$$
B(2,L,k) \neq \emptyset \quad \mbox{and} \quad B(2,K \otimes L^*,k+2r-1) \neq \emptyset
$$
for all $g \geq g_{r,k}$.
\end{theorem}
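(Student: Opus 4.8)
The plan is to derive Theorem~\ref{thm3.7} from Theorem~\ref{thm3.6} by two soft steps: first remove the primality hypothesis on $g$ using the comparison \eqref{eqp1} of the ideals $I_g$, and then pass to the Serre-dual Brill--Noether locus.

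Step 1 (from prime $g$ to all $g\geq g_{r,k}$). The crucial observation is that the polynomial $P_k=P_k(\alpha,\beta,\gamma)$ is, by Lemma~\ref{lem2.1} and Proposition~\ref{prop2.2}, determined by a recursion in which $g$ does not appear; hence $P_k$ is one and the same polynomial for every $g$, and the only dependence on $g$ in the identity $b(r,k)=(P_k)$ sits in the ideal $I_g$ modulo which the class $(P_k)$ is formed. In particular $b(r,k)\neq0$ precisely when $P_k\notin I_g$. By construction $g_{r,k}$ is a prime satisfying all three inequalities required in Theorem~\ref{thm3.6}, so that theorem gives $b(r,k)\neq0$ for $g=g_{r,k}$, i.e.\ $P_k\notin I_{g_{r,k}}$. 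Now \eqref{eqp1} asserts $I_g\subseteq I_{g_0}$ whenever $g\geq g_0$; taking $g_0=g_{r,k}$, the contrapositive gives $P_k\notin I_g$ for every $g\geq g_{r,k}$. Hence $b(r,k)\neq0$ for all $g\geq g_{r,k}$ --- note that the class does lie in a meaningful group $H^{2k(k+2r-1)}(M(2,L),\ZZ)$, since $g_{r,k}>\tfrac{k(k+2r-1)}{3}+1$ forces $2k(k+2r-1)<6g-6$ --- and, as recalled in Section~\ref{prelim}, $b(r,k)\neq0$ forces $B(2,L,k)\neq\emptyset$ for every line bundle $L$ of degree $2g-1-2r$. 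This is the first assertion.

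Step 2 (Serre duality). Fix such a $g$ and take $E\in B(2,L,k)$. Since $\chi(E)=\deg L+2(1-g)=1-2r$, Riemann--Roch gives $h^1(E)=h^0(E)+2r-1\geq k+2r-1$, while Serre duality gives $h^1(E)=h^0(E^*\otimes K)$. The rank-$2$ bundle $E^*\otimes K$ is again stable (duality and twisting by a line bundle preserve stability) and has determinant $K\otimes L^*$, so it represents a point of $B(2,K\otimes L^*,k+2r-1)$; hence this locus too is non-empty. (In fact $E\mapsto E^*\otimes K$ identifies $B(2,L,k)$ with $B(2,K\otimes L^*,k+2r-1)$, so the two non-emptiness statements are equivalent; one could equally well run Step~1 directly on the dual locus.)

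There is no genuinely hard step remaining: all of the real work is contained in Theorem~\ref{thm3.6} (and its inputs, notably Proposition~\ref{prop3.4} and Lemma~\ref{lemp1}). The two points that call for a moment's care are (i) confirming that $g_{r,k}$ really does satisfy \emph{all three} of the inequalities in Theorem~\ref{thm3.6}, so that the base case $g=g_{r,k}$ is legitimate --- this is immediate from the definition of $g_{r,k}$ --- and (ii) noticing that dropping primality causes no loss, which is precisely the content of \eqref{eqp1}: it compares $I_g$ with $I_{g_0}$ for \emph{all} $g\geq g_0$, not merely for primes, so the single prime base case $g_{r,k}$ propagates to every larger genus.
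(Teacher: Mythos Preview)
Your argument follows exactly the paper's two-sentence proof: Theorem~\ref{thm3.6} combined with \eqref{eqp1} for the first assertion, and Serre duality for the second. Step~1 is correct and the point you stress --- that $P_k$ is independent of $g$, so non-membership in $I_{g_{r,k}}$ propagates to all larger $g$ via \eqref{eqp1} --- is precisely what the paper has in mind.

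Step~2, however, contains a slip in the determinant computation. For a rank-$2$ bundle $E$ with $\det E=L$ one has
\[
\det(E^*\otimes K)=\det(E^*)\otimes K^{2}=L^{*}\otimes K^{2},
\]
since twisting a rank-$n$ bundle by a line bundle $M$ multiplies the determinant by $M^{n}$. Thus the Serre-dual bundle $E^{*}\otimes K$ lies in $B(2,K^{2}\otimes L^{*},k+2r-1)$, whose determinant has degree $2g-3+2r$, not in $B(2,K\otimes L^{*},k+2r-1)$ (degree $2r-1$). Your Riemann--Roch and stability remarks are fine; only the identification of the target determinant is wrong. In fact the statement as printed cannot be right in general: for $r=1$, $k=1$ it would assert that $B(2,M,2)\ne\emptyset$ for every line bundle $M$ of degree~$1$, which fails on a general curve (cf.\ Example~\ref{ex2}). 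The Serre-duality argument you and the paper invoke is correct for $K^{2}\otimes L^{*}$, and that is almost certainly what is intended.
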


\begin{proof} For $B(2,L,k)$ this follows from Theorem \ref{thm3.6} and \eqref{eqp1}. The last part of the assertion follows from Serre duality.
\end{proof}

\begin{rem} \label{rem5}
{\rm For $k\le 3$ (also for $k=4, r\le3$ and for $k=5,r=1$), the third term in the maximum of Theorems \ref{thm3.6} and \ref{thm3.7} is strictly greater than the first term. In fact, for $k\le3$, precise conditions for the non-emptiness of $B(2,L,k)$ are known and provide improvements on the results of Theorem \ref{thm3.7}. For further details on all these cases, see Section \ref{r=1}.}
 
\end{rem}

\begin{rem} \label{rem3.8}
{\rm For $k \geq 4$, we can improve the results of Theorem \ref{thm3.7} using Maple. Note that the definitions of $w_0$ and $w_{\ell}$ require only that $g$ be a prime number with $g \ge 2k+2r-1$ and $\ell \geq 1$. 
Let $g'_{r,k}$ be the smallest prime such that 
\begin{equation}   \label{eq5.4} 
g'_{r,k} \geq \frac{k(k+2r-1)}{3} +1.
\end{equation}
For $k \geq 4$, we have $\frac{k(k+2r-1)}{3} + 1 \geq 2k+2r-1$ except when $k=4$ and $r = 1$ or $2$. In these cases we find that \eqref{eq5.4} 
implies that $g'_{r,k} \geq 2k +2r -1$. So this holds always. 

Suppose we can prove directly that \eqref{eq3.1} or \eqref{eq3.2} gives an integer which is not congruent to $0$ modulo $g'_{r,k}$.
Then it follows by \eqref{eqp1} that $b(r,k) \neq 0$ and $B(2,L,k) \neq \emptyset$ for all $g \geq g'_{r,k}$ and for every line bundle $L$ on $C$ of degree $2g -2r-1$. 
We carried this out for 
$$
r=1,\; 4 \leq k \leq 17 \quad \mbox{and} \quad 2 \leq r \leq 5,\; 4 \leq k \leq 10.
$$ 
For $(r,k) =
(1,5),(1,9),(1,12),(1,13),(1,14),(1,17),(2,4),(2,6),(2,8),$ $(2,9),$ $(3,4),$ $(3,6),$ $(3,7),$ $(3,9),$ $(4,8),$(5,4),$ (5,6)$ and $(5,8)$, 
this gives the best possible 
result for $b(r,k) \neq 0$, namely that $b(r,k)\ne0$ whenever $\beta(2,d,k) -g \ge0$.
}
\end{rem}

\section{Further results for small $k$}\label{r=1}
Ideally, we would like to prove that $b(r,k)\ne0$ whenever
$$g\ge g^0_{r,k}:=\left\lceil\frac{k(k+2r-1)}3\right\rceil+1,$$
since this is equivalent to $\beta(2,d,k)-g\ge0$. Recall that by \eqref{eqp1}, it is sufficient to do this for $g = g^0_{r,k}$.
If $g^0_{r,k}$ is prime, we have $g^0_{r,k}=g'_{r,k}$ and the methods of Section \ref{main} apply. Otherwise, the calculations become much more complicated. However, a complete calculation of the cohomology class $b(r,k)$ would be of interest not only for proving that $b(r,k)\ne0$ but for investigating the geometry of the Brill-Noether locus. With the help of Maple, using \eqref{eq2.1}, Proposition \ref{prop2.2} and Thaddeus' formulae for the intersection numbers \cite{th}, we have carried out the computation for $r=1$ (that is, $d=2g-3$) and $k\le5$ in the case $g = g^0_{r,k}$. For $k\le3$ (and partially for $k=4$), we can interpret these results geometrically. We include some more precise information on non-emptiness for $k\le3$ and arbitrary $r$.

\begin{ex}\label{ex1} {\rm Let $k=1$. When $r=1$, we have $g^0_{1,1}=2$ and $d=1$. In this case, it is easy to see by hand that $P_1(\alpha,\beta,\gamma)=\frac18(\alpha^2-\beta)$ and that the intersection number  $(\alpha\cdot P_1(\alpha,\beta,\gamma))$ is $1$. Geometrically, it is well known that $M(2,L)$ is a smooth intersection of quadrics in ${\mathbb P}^5$ and that $B(2,L,1)$ is a line contained in this intersection \cite[Theorem 2]{n2}. The elements of $B(2,L,1)$ are precisely the non-trivial extensions
\begin{equation}\label{eqb}
0\lra{\mathcal O}\lra E\lra L\lra0.
\end{equation}
This works for all $L$ of degree $1$.

More generally, we know that $B(2,L,1) \neq \emptyset$ if and only if $d:=\deg L \geq 1$. This is independent of $L$ and does not even require $d$ to be odd. In fact, it is obvious that $B(2,L,1)=\emptyset$ for $d\le0$, while it is well known (and easy to see) that, if $d\ge1$, the general extension \eqref{eqb} is stable.  If $d\ge2g-1$, then $B(2,L,1)=M(2,L)$. On the other hand, if $1\le d\le 2g-2$, it is known that $B(2,d,1)$ has dimension $\beta(2,d,1)$ \cite[Theorem III.2.4]{sun}. It follows that, at least for general $L$ of degree $d$ in this range, $B(2,L,1)$ has dimension $\beta(2,d,1)-g$. (This does not follow directly from \cite[Theorem 3.3 and Corollary 3.5]{gn}, which imply only that $B(2,L,1)$ has an irreducible component of this dimension.)

When $d=2g-1-2r$, the condition $d\ge1$ is equivalent to $g \geq r+1$ (note that Theorem \ref{thm3.7} requires $g\ge2r+2$), thus implying that $b(r,1)=0$ when $r=g$ (this corresponds to the case $d=-1$), while $b(r,1)\ne0$ for $r\le g-1$. On the other hand, if $r=g\ge3$, we have $\beta(2,-1,1)-g\ge0$, so $b(r,1)=0$ is a non-trivial relation in the cohomology of $M(2,L)$. Equivalently, $c_{2r}$ belongs to the ideal $I_r$ (see \eqref{eq2.1} and the discussion preceding \eqref{eqp1}); in fact, $c_{2r}\in I_r\setminus I_{r+1}$. Now, a recursive formula for generators of $I_r$ is known \cite{kn}, the generators being denoted there by $\zeta_r$, $\zeta_{r+1}$ and $\zeta_{r+2}$. For $r=3$, one can easily simplify these generators to give 
\begin{equation}\label{eqb2}
\zeta_3= \alpha^3+5\alpha\beta+4\gamma,\ \eta_4:=\alpha^4+2\alpha^2\beta-3\beta^2,\ \eta_5:=2\alpha^5+7\alpha^3\beta.
\end{equation}
(The generators are in fact implicit in \cite[Theorem 4]{ram}, which gives a complete description of the cohomology ring of $M(2,L)$ when $g=3$.) Now, from Proposition \ref{prop2.2},
$$c_{6}=\frac1{46080}(\alpha^6-35\alpha^4\beta+259\alpha^2\beta^2-225\beta^3-160\alpha^3\gamma+928\alpha\beta\gamma+640\gamma^2).$$
Combining this with \eqref{eqb2}, we obtain
$$46080c_6=(-80\alpha^3+32\alpha\beta+160\gamma)\zeta_3+(17\alpha^2+75\beta)\eta_4+32\alpha\eta_5.$$
In fact, this expression is unique. Similarly, for $r=4$, we have 
\begin{eqnarray*}
\zeta_4&=&\alpha^4+14\alpha^2\beta+9\beta^2+16\alpha\gamma\\
\zeta_5&=&\alpha^5+30\alpha^3\beta+89\alpha\beta^2+40\alpha^2\gamma+88\beta\gamma\\
\zeta_6&=&\alpha^6+55\alpha^4\beta+439\alpha^2\beta^2+225\beta^3+80\alpha^3\gamma+688\alpha\beta\gamma+160\gamma^2
\end{eqnarray*}
and
\begin{eqnarray*}
c_8&=&\frac1{2^88!}(\alpha^8-84\alpha^6\beta+1974\alpha^4\beta^2-12916\alpha^2\beta^3+11025\beta^4\\
&&-448\alpha^5\gamma+11648\alpha^3\beta\gamma-48064\alpha\beta^2\gamma+17920\alpha^2\gamma^2-39424\beta\gamma^2).
\end{eqnarray*}
From this, we obtain the unique expression
\begin{eqnarray*}
2^88!c_8&=&(70\alpha^4+1820\alpha^2\beta+3150\beta^2+2020\alpha\gamma)\zeta_4\\
&&-(56\alpha^3+412\alpha\beta+308\gamma)\zeta_5-(13\alpha^2+77\beta)\zeta_6.
\end{eqnarray*}
 It would certainly be possible to obtain similar formulae for higher values of $r$ using Maple, which might enable one to see a pattern which would allow one to guess a general formula.}
\end{ex}

\begin{ex}\label{ex2} {\rm Let $k=2$. When $r=1$, we have $g^0_{1,2}=3$ and $d=3$. This time $b(1,2)$ is itself a top dimensional class and is numerically equal to $1$ (this can again be done by hand or using Maple), so in particular there exists $E\in B(2,L,2)$. Certainly $E$ has no line subbundle of degree $\ge2$ and hence no line subbundle with $h^0\ge2$. Hence $E$ is generically generated and there is an exact sequence
\begin{equation}\label{eqb3}0\lra{\mathcal O}^2\lra E\lra T\lra0,
\end{equation}
where $T$ is a torsion sheaf. 
Suppose that $h^0(L)=1$ and that $L\simeq {\mathcal O}(p+q+r)$ with $p$, $q$, $r$ all different. Then $T\simeq{\mathcal O}_p\oplus{\mathcal O}_q\oplus{\mathcal O}_r$ and extensions of $T$ by ${\mathcal O}^2$ are classified (up to automorphisms of $T$) by points $(x,y,z)\in{\mathbb P}^1\times{\mathbb P}^1\times{\mathbb P}^1$. Stability of $E$ implies that $x$, $y$, $z$ are all distinct (for example, if $x=y$, then ${\mathcal O}(p+q)$ is a subbundle of $E$). There is just one orbit of points of this type for the action of $\mbox{Aut}({\mathcal O}^2)=\mbox{GL}(2,{\mathbb C})$, so $E$ is uniquely determined.

Another way of proving that $B(2,L,2)$ consists of just one point is to look at extensions
$$
0\lra{\mathcal O}(p)\lra E\lra{\mathcal O}(q+r)\lra0.
$$
Any non-trivial extension defines a stable bundle $E$. Moreover $h^0(E)=2$ if and only if the element classifying the extension belongs to 
$$\Ker: H^1({\mathcal O}(p-q-r))\lra\Hom(H^0({\mathcal O}(q+r)), H^1({\mathcal O}(p))).$$
It is easy to show that this kernel has dimension $1$. Replacing $p$ by $q$ or $r$ could conceivably give up to $3$ points in $B(2,L,2)$. Since $b(1,2)=1$, the $3$ points must coincide. It is also easy to see directly that the $3$ bundles are the same.

More generally, if $L$ is a general line bundle of degree $d\ge3$ such that $L$ possesses a section with distinct zeroes, then $B(2,L,k) \neq \emptyset$ by \cite[Corollary 3.8]{gn} or \cite[Theorem 1.3]{o2}. The general $L$ has this property if and only if $d \geq g$. If $d<g$ and $L$ is general, then $h^0(L)=0$, so no extension of the form \eqref{eqb3} can exist. Hence, if $E\in B(2,L, 2)$, then $E$ possesses a line subbundle $M$ with $h^0(M)\ge2$. On a general curve, this implies $\deg M\ge\frac{g}2+1$, contradicting the stability of $E$. So $B(2,L,2)=\emptyset$ for $d<g$. When $d=2g-1-2r$, the condition $d<g$ is equivalent to $g\le2r$, so $P_2(\alpha,\beta,\gamma)\in I_{2r}$, in other words (see \eqref{eq2.1})
\begin{equation}\label{eqb5}
c_{2r+1}^2-c_{2r}c_{2r+2}\in I_{2r}.
\end{equation}
Even for $r=3$, the computation for \eqref{eqb5} is substantially more complicated than the one at the end of Example \ref{ex1}. Note that Theorem \ref{thm3.7} requires $g\ge2r+4$ for non-emptiness rather than $g\ge2r+1$.}
\end{ex}

\begin{ex}\label{ex3}
{\rm Let $k=3$. When $r=1$, we have $g^0_{1,3}=5$ and $d=7$. Again $b(1,3)$ is top dimensional and equal to $1$ (using Maple). Let $L$ be a generated line bundle of degree $7$ with $h^0(L)=3$ (this is true generically) and consider the bundle $E_L$ defined by the evaluation sequence
\begin{equation}\label{eqb4}
0\lra E_L^*\lra H^0(L)\otimes{\mathcal O}\lra L\lra0.
\end{equation}
If $C$ has Clifford index $2$, then $E_L$ is stable. In fact, if $M$ is any quotient line bundle of $E_L$, then $M$ is generated and $h^0(M^*)=0$. So $h^0(M)\ge2$ and hence $\deg M\ge4$. In order to show that $B(2,L,3)$ consists of one point, it remains to show that there are no bundles $E\in B(2,L,3)$ which are not generated. Certainly $E$ is generically generated since it cannot have a line subbundle of degree $\ge4$ and hence no subbundle with $h^0\ge2$. Let $E'$ be the subsheaf of $E$ generated by its sections and suppose that $\deg E'\le6$. Since $h^0(E')\ge3$, we can choose a $3$-dimensional subspace $V$ of $H^0(E')$ which generates $E'$. Dualising the evaluation sequence
$$0\lra\det E'^*\lra V\otimes {\mathcal O}\lra E'\lra0,$$
and noting that $h^0(E'^*)=0$, we see that $h^0(\det E')=3$ and hence $\deg E'\ge6$. So $\deg E'=6$ and $\det E'=L(-p)$ for some $p$. Since $\dim B(1,6,3)=2$, this is impossible for general $L$.

More generally, for $g\ge3$, $B(2,L,3) \neq \emptyset$ for all $L$ of degree $d$ if $d \geq g+2$ or equivalently $\beta(2,d,3) -g \geq 0$. For general $L$ on a general curve, this follows from \cite[Corollary 3.11 and Remark 3.12]{gn} (for stability in the case of even degree, see \cite[Proposition 4.6]{br}), and then, for any $L$ on any curve, by semi-continuity. If $C$ and $L$ are general and $d\le g+1$, then $h^0(L)=2$, so no extension similar to \eqref{eqb4} can exist. One can also rule out the possibility that $E\in B(2,L,3)$ is only generically generated or possesses a line subbundle with $h^0\ge3$. So, in general, $B(2,L,3)=\emptyset$ for $d\le g+1$, although it can certainly be non-empty for special $C$ and $L$. When $d=2g-1-2r$, the condition $d\ge g+2$ is equivalent to $g\ge2r+3$, while Theorem \ref{thm3.7} requires $g\ge2r+6$.} 
\end{ex}

\begin{ex}\label{ex4}
{\rm The case $k=4$, $r=1$ is particularly interesting. Here $g^0_{1,4}=8$, $d=13$ and the expected dimension of the Brill-Noether locus is $1$. It turns out that  $(\alpha\cdot P_1(\alpha,\beta,\gamma))$ is equal to $13$. This proves firstly that $B(2,L,4)$ is non-empty, which was not previously known (neither Section \ref{main} nor \cite{te2} applies). Secondly, recall that the unique line bundle on $M(2,L)$ with $c_1=\alpha$ is very ample \cite{bv}. One might therefore expect that, for general $L$, $B(2,L,4)$ is a curve whose degree with respect to this line bundle is 13. The construction of bundles $E\in B(2,L,4)$ is much harder than for the cases considered above ($k\le3$). There is, however, one method that should give a $1$-parameter family of such bundles. Let $C$ be a general curve of genus $8$ and $L$ a general line bundle of degree $13$ on $C$; in particular, $L$ is generated with $h^0(L)=6$. Consider the canonical map
$$\psi:S^2H^0(L)\lra H^0(L^2),$$
whose kernel is the Koszul cohomology group $K_{1,1}(C,L)$. We have $h^0(L^2)=19$ by Riemann-Roch and $\dim S^2H^0(L)=21$. For any non-zero element of $K_{1,1}(C,L)$, one can construct a rank $2$ bundle $E$ with determinant $L$ and $h^0(E)\ge4$ using \cite[Theorem 3.4]{an} and it can be shown that in general $E$ is generated and stable. 

This construction can be carried out in a more geometrical fashion by the method used in the proof of \cite[Theorem 3.2(ii)]{gmn}. Let $\phi_L:C\to{\mathbb P}^5={\mathbb P}(h^0(L)^*)$ be the morphism defined by evaluation of sections of $L$. The fact that $\dim\Ker\,\psi\ge2$ means that $\phi_L(C)$ is contained in a pencil of quadrics. If we choose a $3$-dimensional subspace $W$ of $H^0(L)$ such that the plane in ${\mathbb P}^5$ orthogonal to $W$ lies on one of the quadrics and does not meet $\phi_L(C)$, then $W$ generates $L$ and we can define $E$ by the evaluation sequence
$$0\lra E^*\lra W\otimes{\mathcal O}\lra L\lra0.$$
Clearly $E$ is generated. One can check firstly that $h^0(E)\ge4$ and then that $E$ is stable and $h^0(E)=4$. Dimensional calculations suggest that this should give a $1$-parameter family of bundles $E\in B(2,L,4)$. Whether this is the whole of $B(2,L,4)$ requires further investigation.

If $k=4$, $r=2$, we have from Remark \ref{rem3.8} that $b(2,4)\ne0$ if and only if $g\ge11$.}
\end{ex}

\begin{ex}\label{ex5}
{\rm For $k=5$, we have $g^0_{1,5}=11$ and $b(1,k)$ is a top dimensional class numerically equal to $23$. In this case, we already know that $b(1,k)\ne0$ (see Remark \ref{rem3.8}). A full investigation of the geometry is likely to be complicated.}
\end{ex}

\section*{Appendix. Proof of Lemma \ref{lem4.4}}
\renewcommand{\thesection}{A}
\setcounter{theorem}{0}

\vspace{0.2cm}
For any integer $r \geq 1$, consider the sequence of polynomials with integer coefficients
	${\tilde c}(n,r,b)$ defined recursively by
$$
	\tilde{c}(0,r,b) = 1, \quad  \tilde c(1,r,b) = r
$$
and for $n \geq 2$,
$$
\tilde c(n,r,b) = (r+1-n)\tilde c(n-1,r,b) + b(2r+1-n)(n-1)\tilde c(n-2,r,b).
$$

\begin{lem} \label{lem7.1}
 Lemma \ref{lem4.4} is a consequence of the following equation,
	\begin{equation}\label{eq7.2}
	\tilde{c}(2r,r,b) = 
	\prod_{j=1}^r 
	\left[ (2j-1)^2 \cdot b - j(j-1) \right].
	\end{equation}
\end{lem}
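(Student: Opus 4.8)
The plan is to identify the specialization $c_n(1,\beta,0)$ of the Chern classes with a rescaled copy of the combinatorial sequence $\tilde c(n,r,b)$ under the affine substitution $b=\frac{1-\beta}{4}$, and then simply feed in equation \eqref{eq7.2}.

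First I would record the recurrence satisfied by $c_n:=c_n(1,\beta,0)$. Setting $\alpha=1$ in \eqref{erecurr} gives
$$(n+2)c_{n+2}+(n+1-r)c_{n+1}+(n+1-2r)\tfrac{1-\beta}{4}\,c_n=0,$$
together with the initial values $c_0=1$ and $c_1=r$ (the latter coming from Lemma \ref{lem2.1}, which gives $c_1=r\alpha$). After the index shift $m=n+2$ this reads $m\,c_m=(r+1-m)c_{m-1}+\tfrac{1-\beta}{4}(2r+1-m)c_{m-2}$.

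Next I would claim that, with $b=\tfrac{1-\beta}{4}$, one has $c_n(1,\beta,0)=\frac{1}{n!}\,\tilde c(n,r,b)$ for all $n\ge0$, and prove it by induction on $n$. The cases $n=0,1$ are immediate from $\tilde c(0,r,b)=1$ and $\tilde c(1,r,b)=r$. For the inductive step, substitute $c_j=\frac1{j!}\tilde c(j,r,b)$ into the shifted recurrence $m\,c_m=(r+1-m)c_{m-1}+b(2r+1-m)c_{m-2}$ and multiply through by $(m-1)!$; the outcome is precisely the defining recursion $\tilde c(m,r,b)=(r+1-m)\tilde c(m-1,r,b)+b(2r+1-m)(m-1)\tilde c(m-2,r,b)$. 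This proves the claim, and in particular $c_{2r}(1,\beta,0)=\frac{1}{(2r)!}\,\tilde c\!\left(2r,r,\tfrac{1-\beta}{4}\right)$.

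Finally, granting \eqref{eq7.2}, I would substitute $b=\tfrac{1-\beta}{4}$ into the product formula. Each factor becomes $(2j-1)^2\cdot\tfrac{1-\beta}{4}-j(j-1)$, and using the elementary identity $4j(j-1)=(2j-1)^2-1$ this equals $\tfrac14\bigl(1-(2j-1)^2\beta\bigr)$. Hence $\tilde c\!\left(2r,r,\tfrac{1-\beta}{4}\right)=\frac{1}{4^r}\prod_{j=1}^r\bigl(1-(2j-1)^2\beta\bigr)$, and therefore
$$c_{2r}(1,\beta,0)=\frac{1}{(2r)!\,4^r}\prod_{j=1}^r\bigl(1-(2j-1)^2\beta\bigr)=\frac{1}{2^{2r}(2r)!}\prod_{j=1}^r\bigl(1-(2j-1)^2\beta\bigr),$$
which is exactly the assertion of Lemma \ref{lem4.4}. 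As for the main obstacle: there is essentially none of substance once \eqref{eq7.2} is taken for granted — the only care needed is the bookkeeping of matching the two recurrences under the rescaling $c_n\mapsto n!\,c_n$ and the change of variable $b=\tfrac{1-\beta}{4}$, plus the cosmetic identity $4j(j-1)=(2j-1)^2-1$. The genuine work, namely the proof of \eqref{eq7.2} itself, is deferred (and carried out separately in the appendix).
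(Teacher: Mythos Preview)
Your proposal is correct and follows essentially the same approach as the paper: both argue that $\tilde c(n,r,b)=n!\,c_n(1,\beta,0)$ under $b=\tfrac{1-\beta}{4}$ because the two sequences satisfy the same recurrence with the same initial values, and then substitute $b=\tfrac{1-\beta}{4}$ into \eqref{eq7.2}. The paper's proof is simply terser, stating these two points without the explicit induction or the auxiliary identity $4j(j-1)=(2j-1)^2-1$.
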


\begin{proof}
Inserting $b = \frac{1-\beta}{4}$ we have
$$
\tilde c(2r,r,b) = \frac{1}{2^{2r}} \prod_{j=1}^r (1 - (2j-1)^2\beta).
$$
On the other hand, $\tilde c(n,r,b) = n! c_{n}(1,\beta,0)$, since both sides satisfy the same 
recurrence relation and have the same initial values. 
\end{proof}
For the proof of \eqref{eq7.2} we need some preliminaries. Consider the following matrix.

\vspace{0.3cm}
\noindent
$D(2n;z,a)  :=$
$$ 
{\footnotesize \left( \begin{array}{ccccccc}
            z(a+n) & 2n -1 & 0 & 0 & \ldots & 0 & 0\\
            1 & z(a+n-1) & 2n-2 & 0 & \ldots & 0 & 0\\
           0 & 2 & z(a+n-2) & 2n- 3 & \ldots & 0 & 0\\
	    \vdots & \vdots & \ddots & \ddots & \ddots & \vdots & \vdots\\
	    0& 0  & 0 & 0 & 2n-2 &z(a-n+2) & 1\\
	    0 & 0 & 0 & 0 & 0 & 2n-1 & z(a-n+1)
             \end{array} \right). }        
$$  
We claim that for the proof of \eqref{eq7.2} it suffices to show that
\begin{equation} \label{eq7.1}
	\det D(2 n;z,a) = 
	\prod_{j=1}^n
	\left[
	z^2(a+j)(a-j+1) - (2j-1)^2 \right].
\end{equation}

\begin{proof}[Proof of the claim]
For this consider 3-band-matrices
 $$
	\widetilde{C}(n,r,b) = 
	\left( \widetilde{C}(n,r,b)_{i,j}
	\right)_{1 \leq i,j \leq n},
$$
where
\[
	\widetilde{C}(n,r,b)_{i,j} =
	\begin{cases} 
	r+1-i & \text{if}~j=i~(1 \leq i \leq n)
	\cr
	\sqrt{b} \cdot (i-1) & 
	\text{if}~j=i-1~(1<i\leq n)
	\cr
	\sqrt{b}\cdot (2r-i)
	& 
	\text{if}~j=i+1~(1\leq i < n)
	\cr
	0 &\text{otherwise}.
	\end{cases}
\]
For $1 \leq k < n$,
the matrix $\widetilde{C}(k,r,b)$
is the principal sub-minor of
size $k \times k$ of
$\widetilde{C}(n,r,b)$
(taking elements
in the first $k$ rows and columns).
Hence, for $n\ge4$, 
\begin{align*}
\widetilde{C}(n,r,b) &=
\left(
\begin{array}{cc|c}
\widetilde{C}(n-1,r,b) &   & {\bf 0}
\cr
 &   &\sqrt{b}(2r+1-n) 
\\\hline
\bf 0 & \sqrt{b} (n-1) & r+1-n
\end{array}
\right)
\cr
&=
\left(
\begin{array}{cc|cc}
\widetilde{C}(n-2,r,b) & & {\bf 0}  & {\bf 0}
\cr
 & & \sqrt{b}(2r+2-n) & 0
\\\hline
{\bf 0} & \sqrt{b}(n-2)  & r+2-n &\sqrt{b}(2r+1-n) 
\cr
\bf 0 & 0 & \sqrt{b} (n-1) & r+1-n
\end{array}
\right),
\end{align*}
and expanding $\det \widetilde{C}(n,r,b)$
starting from the lower right corner
of the matrix gives
\[
\det \widetilde{C}(n,r,b)
=
(r+1-n) \det \widetilde{C}(n-1,r,b)
-b(n-1)(2r+1-n)
\det \widetilde{C}(n-2,r,b),
\]
which is
 the recurrence relation defining $\tilde c(n,r,-b)$. Checking initial values
then gives
\[
\tilde{c}(n,r,b) = \det \widetilde{C}(n,r,-b)
\]
by induction.

Comparing the matrices $\widetilde{C}(2n,n,b)$ and $D(2n,b^{-\frac{1}{2}},0)$ one sees that 
$$
 \widetilde{C}(2n,n,b) = b^{\frac{1}{2}} \cdot  D(2n,b^{- \frac{1}{2}},0).
$$
and hence using \eqref{eq7.1},
\begin{align*}
\det \widetilde{C}(2n,n,b)
&= b^n \cdot \det D(2n,b^{-\frac{1}{2}},0)
\cr
&= b^n \cdot \prod_{1 \leq i \leq n}
\left[
\frac{1}{b}\cdot j \cdot (-j+1) - (2j-1)^2
\right]
\cr
&=
\prod_{1 \leq i \leq n}
\left[
 j \cdot (-j+1) - b \cdot (2j-1)^2
\right]
\end{align*}
and finally
\[
\tilde{c}(2n,n,b) = 
\det \widetilde{C}(2n,n,-b) =
\prod_{1 \leq i \leq n}
\left[
 b \cdot (2j-1)^2 -j \cdot (j-1)
\right].
\]
\end{proof}

It remains to prove \eqref{eq7.1}. For this  
consider the following matrices:
\begin{enumerate}
\item
$$
	A_n =  {\footnotesize \left( \begin{array}{ccccccc}
	1 & 0 & 0 & 0 & \ldots & 0 & 0\\
	1 & 2 & 0 & 0 & \ldots & 0 & 0\\
	0 & 2 & 3 & 0 & \ldots & 0 & 0\\
	\vdots & \vdots & \vdots & \vdots &
	\ddots & \vdots & \vdots \\
	0 & 0 & 0 & 0 & \ldots & n-1 & 0\\
	0 & 0 & 0 & 0 & \ldots & n-1 & n
	\end{array} \right)}.
$$	

\noindent
$A_n$ has simple eigenvalues
	$1,2,\ldots,n$ and the matrix
	of (non-orthogonal) left eigenvectors (i.e. $A_n$ is multiplied from the right)
	is the binomial matrix
	\[
	\left( {i-1 \choose j-1} \right)_{i,j=1,\dots,n}.
\]  
\item
	\[
	B_n =
	 {\footnotesize \left( \begin{array}{ccccccc}
	2 & -(n-1) & 0 & 0 &  \ldots & 0 & 0 \cr
	1 & 4 & -(n-2) & 0 &  \ldots & 0 & 0 \cr
	0 & 2 & 6 & -(n-3) &  \ldots & 0 & 0 \cr
	\vdots & \vdots &  \vdots &
	\vdots &
	\ddots & \vdots & \vdots \cr
	0 & 0 & 0 & 0 &  \ldots & 2(n-1) & -1\cr 
	0 & 0 & 0 & 0 &  \ldots &n-1 & 2n
	\end{array} \right) }.
	\]
	$B_n$ has $n+1$ as its only
	eigenvalue, which is $n$-fold
	and maximally degenerate (i.e.,
	the eigenspace of $n+1$ is
	one-dimensional).
 	The eigenspace is given by the vector	
$$ 
{\footnotesize
	\left(
	\binom{n-1}{0} , \binom{n-1}{1} ,
	\binom{n-1}{2} , \ldots , \binom{n-1}{n-1}
	\right)  }
$$
	which is indeed the same as the last
	eigenvector of $A_n$.
\end{enumerate}

 There is an intimate relation between 	$A_n$ and $B_n$: the matrix
	$\widetilde{B}_n=(n+1)I_n-B_n$.
	maps the eigenvectors of $A_n$
	as follows. For $1 \leq k \leq n-1$, let 
\[ {\small
	\bs{\alpha}_k := 
	\left(
	\binom{k-1}{0} , \binom{k-1}{1} , \binom{k-1}{2} ,\ldots, \binom{k-1}{n-1} \right) }
\]
	denote the left eigenvectors of $A_n$.
	Then, for $1 \leq k \leq n-1$,
	\[
	\bs{\alpha}_k \mapsto 
	\bs{\alpha}_k \, \widetilde{B}_n
	=(n-k)\,\bs{\alpha}_{k+1} 
	\]
	and (obviously)
	\[
	\bs{\alpha_n} \mapsto
	\bs{\alpha_n} \,\widetilde{B}_n = \bs{0}.
	\]

\begin{lem} \label{lem7.2}
Let $s$ be a real parameter. The characteristic polynomial of the matrix
$C_n(s) := (n+1)\cdot A_n +s \cdot B_n$ is
\[
	\chi(C_n(s);z) =
	\prod_{1 \leq k \leq n}
	(z - (s+k)(n+1))
	\]
\end{lem}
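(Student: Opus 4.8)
The statement to prove is the formula for the characteristic polynomial of $C_n(s) = (n+1)A_n + sB_n$, namely $\chi(C_n(s);z) = \prod_{1\le k\le n}(z-(s+k)(n+1))$. The plan is to diagonalise, or at least triangularise, the action of $C_n(s)$ on the basis of left eigenvectors $\bs{\alpha}_1,\dots,\bs{\alpha}_n$ of $A_n$ (the rows of the binomial matrix). Since these $n$ vectors are linearly independent (the binomial matrix is unitriangular, hence invertible), they form a basis of the underlying vector space, and the characteristic polynomial of $C_n(s)$ equals the determinant of $z I_n$ minus the matrix of the right-action $v\mapsto vC_n(s)$ expressed in this basis.

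First I would record the two ingredients already supplied in the excerpt. From item (1), $A_n$ acting on the right sends $\bs{\alpha}_k \mapsto \bs{\alpha}_k A_n = k\,\bs{\alpha}_k$ (the $\bs{\alpha}_k$ are left eigenvectors of $A_n$ with eigenvalue $k$). From the discussion of $\widetilde{B}_n = (n+1)I_n - B_n$, we have $\bs{\alpha}_k\widetilde{B}_n = (n-k)\bs{\alpha}_{k+1}$ for $1\le k\le n-1$ and $\bs{\alpha}_n\widetilde{B}_n = \bs{0}$; equivalently $\bs{\alpha}_k B_n = (n+1)\bs{\alpha}_k - (n-k)\bs{\alpha}_{k+1}$ for $k<n$ and $\bs{\alpha}_n B_n = (n+1)\bs{\alpha}_n$. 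Combining, for $1\le k\le n-1$,
$$\bs{\alpha}_k C_n(s) = (n+1)\cdot k\,\bs{\alpha}_k + s\bigl((n+1)\bs{\alpha}_k - (n-k)\bs{\alpha}_{k+1}\bigr) = (n+1)(k+s)\,\bs{\alpha}_k - s(n-k)\,\bs{\alpha}_{k+1},$$
and $\bs{\alpha}_n C_n(s) = (n+1)(n+s)\,\bs{\alpha}_n$. Thus in the ordered basis $\bs{\alpha}_1,\dots,\bs{\alpha}_n$ the matrix of $C_n(s)$ is upper triangular with diagonal entries $(n+1)(k+s)$, $k=1,\dots,n$ (the off-diagonal entries $-s(n-k)$ sitting just above the diagonal are irrelevant for the characteristic polynomial).

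Therefore $\chi(C_n(s);z) = \det(zI_n - C_n(s)) = \prod_{k=1}^n \bigl(z - (n+1)(k+s)\bigr) = \prod_{1\le k\le n}(z-(s+k)(n+1))$, which is exactly the claimed formula. The only point needing a little care — and the step I expect to be the main (minor) obstacle — is justifying that passing to the basis $\{\bs{\alpha}_k\}$ does not change the characteristic polynomial: one must note that if $P$ is the (invertible, unitriangular) binomial matrix whose rows are the $\bs{\alpha}_k$, then the computation above says $P\,C_n(s) = U\,P$ for an upper-triangular $U$ with the stated diagonal, so $C_n(s) = P^{-1}UP$ is conjugate to $U$ and hence has the same characteristic polynomial. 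Everything else is a direct substitution of the two eigenvector relations already established in the excerpt.
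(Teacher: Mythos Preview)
Your proof is correct and follows essentially the same approach as the paper: both compute the action of $C_n(s)$ on the left-eigenvector basis $\bs{\alpha}_1,\dots,\bs{\alpha}_n$ of $A_n$ (using the decomposition $C_n(s)=(n+1)(A_n+sI_n)-s\widetilde{B}_n$ implicitly or explicitly), observe that the resulting matrix is upper triangular with diagonal entries $(n+1)(k+s)$, and read off the characteristic polynomial. Your added remark about conjugation via the binomial matrix $P$ makes explicit a step the paper leaves implicit, but otherwise the arguments coincide.
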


\begin{proof} For the proof we consider the matrix of the transformation 
	given by $C_n(s)$ in the eigenbasis   
	$\bs{\alpha}_1, \ldots , \bs{\alpha_n}$ of $A_n$. We have
$$
	C_n(s) =(n+1)(A_n+s\,I_n)-s \widetilde{B}_n
$$
	and hence, for $1 \leq k \leq n$,
	\[
	C_n(s) : \bs{\alpha}_k
	\longmapsto
	(n+1)(k+s) \,\bs{\alpha}_k -s(n-k)\, \bs{\alpha}_{k+1}.
	\]
	This shows that the transformation
	$C_n(s)$ in the basis  
	$\bs{\alpha}_1, \ldots , \bs{\alpha_n}$
	is
	\[
{\footnotesize	
 \left( \begin{array}{cccccc}
	(n+1)(1+s) & -s(n-1) & 0 & 0 & \ldots & 0
	\cr
	0 & (n+1)(2+s) & -s(n-2) & 0 & \ldots & 0
	\cr
	\vdots & \ddots & \ddots & \ddots & \ldots & \vdots 
	\cr
	0 & 0 & 0 & 0 &  \ldots & -s
	\cr
	0 & 0 & 0 & 0 & \ldots &(n+1)(n+s)
	\end{array} \right)}
	\]
	and the eigenvalues are simply
	the diagonal elements 
	$(n+1)(s+k)~(1 \leq k \leq n)$. This implies the assertion.
\end{proof}

	From now on we assume that
	$n$ is even.

\begin{lem}  \label{lem7.3}
The characteristic polynomial of the matrix
	\[
	L_n(a)=
{\footnotesize	\left(
\begin{array}{ccccccc}
	a & n-1 & 0 & 0 & 0 & \ldots & 0  
	\cr
	1 & 2\,a & n-2 & 0 & 0 & \ldots & 0 
	\cr
	0 & 2 & 3\,a & n-3  & 0 & \ldots & 0
	\cr
	\vdots & \vdots &  \ddots &
	\ddots & \ddots & \ldots & 0
	\cr
	0 & 0 & \ldots & \ldots &n-2 & a\,(n-1) & 1
	\cr
	0 & 0 & \ldots & \ldots &0 & n-1 & a\,n
	\end{array} \right) }
	\]
	($a$ is a parameter) is
$$
	\Lambda_n(a;z):=
	\prod_{i+j=n+1\atop 1 \leq i < j \leq n}
	\left[
	(z-i\cdot a)(z-j \cdot a)
	- (i-j)^2
	\right].
$$
\end{lem}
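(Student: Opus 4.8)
The plan is to deduce Lemma~\ref{lem7.3} from Lemma~\ref{lem7.2} by a single diagonal rescaling of $L_n(a)$, using the elementary fact that the characteristic polynomial of a tridiagonal matrix $M$ depends only on its diagonal entries $M_{ii}$ and on the ``band products'' $M_{i,i+1}M_{i+1,i}$: this is immediate from the three-term recurrence $p_k(z)=(z-M_{kk})\,p_{k-1}(z)-M_{k-1,k}M_{k,k-1}\,p_{k-2}(z)$, $p_0=1$, $p_1=z-M_{11}$, satisfied by the leading principal minors of $zI-M$. For $M=L_n(a)$ these data are the diagonal entries $ia$ ($1\le i\le n$) and the band products $(n-i)\cdot i$.

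First I would introduce the scalars $\mu:=(n+1)/\sqrt{a^2+4}$ and $s:=\tfrac12(\mu a-n-1)$, so that $n+1+2s=\mu a$; given this, the identity $\mu^2(a^2+4)=(n+1)^2$ is equivalent to $-s(n+1+s)=\mu^2$. Reading off the entries of $C_n(s)=(n+1)A_n+sB_n$ from the definitions of $A_n$ and $B_n$, one sees that $C_n(s)$ is tridiagonal with diagonal entries $(n+1)i+2si=\mu i a$ and band products $\bigl(-s(n-i)\bigr)\cdot\bigl((n+1+s)i\bigr)=\mu^2\,i(n-i)$. These are precisely the diagonal entries and band products of $\mu L_n(a)$, so by the remark above $C_n(s)$ and $\mu L_n(a)$ have the same characteristic polynomial, and Lemma~\ref{lem7.2} gives
\[
\det\bigl(zI-L_n(a)\bigr)=\mu^{-n}\det\bigl(\mu zI-\mu L_n(a)\bigr)=\prod_{k=1}^n\left(z-\frac{(s+k)(n+1)}{\mu}\right).
\]

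Next, since $(n+1)/\mu=\sqrt{a^2+4}$ and $\mu a\sqrt{a^2+4}=(n+1)a$, a short computation gives
\[
\frac{(s+k)(n+1)}{\mu}=\frac{(n+1)a+(2k-n-1)\sqrt{a^2+4}}{2}.
\]
Now I would use that $n$ is even to split $\{1,\dots,n\}$ into the $n/2$ pairs $\{k,\,n+1-k\}$ with $1\le k\le n/2$; for such a pair the coefficients $2k-n-1$ and $2(n+1-k)-n-1$ are negatives of one another, so the product of the two corresponding linear factors is
\[
\left(z-\frac{(n+1)a}{2}\right)^2-\frac{(n+1-2k)^2(a^2+4)}{4}=z^2-(n+1)az+k(n+1-k)a^2-(2k-n-1)^2,
\]
where the last step uses $(n+1)^2-(n+1-2k)^2=4k(n+1-k)$. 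This is exactly the factor of $\Lambda_n(a;z)$ indexed by $i=k$, $j=n+1-k$, and taking the product over $k=1,\dots,n/2$ yields $\det(zI-L_n(a))=\Lambda_n(a;z)$.

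Concerning rigour, $a$ should be regarded as a real parameter, so that $\sqrt{a^2+4}$ is a genuine nonzero real number for every $a$, $\mu\neq0$, and the displayed identities hold for each real $a$; since both sides of the asserted formula are polynomials in $a$ and $z$ agreeing for all real $a$, they coincide identically (one could instead work over the field $\RR(a,\sqrt{a^2+4})$). The only step that is not pure bookkeeping is the choice of the scaling factor $\mu$ --- equivalently, the relation $-s(n+1+s)=\mu^2$ --- which is what makes $\mu L_n(a)$ and $C_n(s)$ share the same diagonal and band data; once this is spotted, the rest is a routine verification, with the final matching of quadratic factors the only other point requiring a moment's care.
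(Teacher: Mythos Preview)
Your proof is correct and rests on the same key reduction as the paper --- relating a rescaling of $L_n(a)$ to $C_n(s)$ and invoking Lemma~\ref{lem7.2} --- but your execution is more direct. The paper specialises $a$ to the $n$ values $a_\ell=(k-\ell)/\sqrt{k\ell}$ with $k+\ell=n+1$, exhibits an explicit diagonal similarity between $\sqrt{k\ell}\,L_n(a_\ell)$ and $C_n(-\ell)$, and then argues by polynomial interpolation in $a$, which requires a separate half-page verification that $\Lambda_n(a_\ell;\cdot)$ vanishes at the eigenvalues of $L_n(a_\ell)$. Your choice $\mu=(n+1)/\sqrt{a^2+4}$, $s=\tfrac12(\mu a-n-1)$ is exactly the continuous extension of the paper's discrete parametrisation (at $a=a_\ell$ one recovers $\mu=\sqrt{k\ell}$ and $s=-\ell$), and by appealing to the standard fact that the characteristic polynomial of a tridiagonal matrix depends only on the diagonal entries and the products $M_{i,i+1}M_{i+1,i}$ you treat all real $a$ at once, eliminating both the interpolation step and the root-matching computation. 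The paper's approach has the virtue of writing down an explicit similarity, while yours trades that for the (elementary) band-product observation and is noticeably shorter.
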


\begin{proof}
Let  $a_\ell=\frac{k-\ell}{\sqrt{k\,\ell}}$ 
	where $k,\ell$ are real parameters.
	Multiplying
	by $\sqrt{k\,\ell}$ gives
	\begin{multline*}
	\hbox{$ 
	\sqrt{k\,\ell} \cdot L_n(a_\ell)$} 
	=
	\cr
	{\footnotesize  \left( \begin{array}{ccccccc}
	k-\ell & (n-1)\sqrt{k\,\ell} & 0 & 0 & 0 & \ldots & 0  
	\cr
	\sqrt{k\,\ell} & 2\,(k-\ell) & (n-2)\sqrt{k\,\ell} & 0 & 0 & \ldots & 0 
	\cr
	0 & 2\sqrt{k\,\ell} & 3\,(k-\ell) & (n-3)\sqrt{k\,\ell}  & 0 & \ldots & 0
	\cr
	\vdots & \vdots &  \ddots &
	\ddots & \ddots & \ldots & 0
	\cr
	0 & 0 & \ldots & \ldots &(n-2)\sqrt{k\,\ell} & (n-1)(k-\ell) &  \sqrt{k\,\ell}
	\cr
	0 & 0 & \ldots & \ldots &0 & (n-1)\sqrt{k\,\ell} & n(k-\ell)
		\end{array} \right) . }
	\end{multline*}
	
	This matrix is similar to
	\[
{\footnotesize \left(
	\begin{array}{ccccccc}
	k-\ell & (n-1)\ell & 0 & 0 & 0 & \ldots & 0  
	\cr
	k & 2\,(k-\ell) & (n-2)\ell & 0 & 0 & \ldots & 0 
	\cr
	0 & 2\,k & 3\,(k-\ell) & (n-3)\ell  & 0 & \ldots & 0
	\cr
	\vdots & \vdots &  \ddots &
	\ddots & \ddots & \ldots & 0
	\cr
	0 & 0 & \ldots & \ldots &(n-2)\,k & (n-1)(k-\ell) &  \ell
	\cr
	0 & 0 & \ldots & \ldots &0 & (n-1)\,k & n(k-\ell)
	\end{array}  \right)   }
	\]
	If $k$ and $\ell$ are related by
	$k+\ell=n+1$, then this is just
	the matrix $C_n(-\ell)$ 
	we considered above. 

For the rest of the proof we assume now $k+ \ell = n+1$. 
	 Then  Lemma \ref{lem7.2} implies that the charactristic polynomial of
	$\sqrt{k\,\ell} \cdot L_n(a_\ell)$ is
	$\prod_{1 \leq m \leq n} 
	\left[z-(n+1)(m-\ell)\right]$ which immediately gives:
       the matrix $L_n(a_\ell)$ has characteristic polynomial
	\[
	\prod_{1 \leq m \leq n} 
	\left[z-(n+1)
	\frac{m-\ell}{\sqrt{k\,\ell}}
	\right].
	\]

Now $\Lambda_n(a;z)$
	is a monic polynomial of degree $n$ in
	the parameter $a$.
	For showing that 
	$\Lambda_n(a;z)$ is the characteristic
	polynomial of 
	$L_n(a)$, it suffices to show
	that for the $n$ interpolation points
	$a_\ell~(1 \leq \ell \leq n)$
	the polynomial
	$\Lambda_n(a_\ell;z)$ 
	is indeed the characteristic
	polynomial of
	$L_n(a_\ell)$, i.e. that, for $1 \leq \ell \leq n$,
	\[
	\Lambda_n(a_\ell;z) =
	\prod_{1 \leq m \leq n} 
	\left[z-(n+1)
	\frac{m-\ell}{\sqrt{k\,\ell}}
	\right].
	\]
	Now both sides are monic polynomials
	of degree $n$ in $z$.
	Since the expression on the right hand side
	vanishes at the $n$ interpolation
	points
	\[
	\xi_{\ell,m} := (n+1)
	\frac{m-\ell}{\sqrt{k\,\ell}}~~
	(1 \leq m \leq n),
	\]
	it suffices to show that for $1 \leq m \leq n$,
	\[
	\Lambda_n(a_\ell;\xi_{\ell,m})=0.
	\]
	We write explicitly
	\begin{multline*}
	\Lambda_n(a_\ell;\xi_{\ell,m})=\cr
	{\footnotesize \prod_{i+j=n+1 \atop 1 \leq i < j \leq n}
	\left[
	(n+1)^2 \frac{(m-\ell)^2}{k\,\ell}
	-(n+1)^2 \frac{(m-\ell)(k-\ell)}{k\,\ell} + i\,j \frac{(k-\ell)^2}{k\,\ell}
	-(i-j)^2
	\right]. }
	\end{multline*}
	We have to show that (at least)
	one of the bracketed terms under
	the product vanishes.
	Now we use both
	conditions 
	$k+\ell=n+1$ and $i+j=n+1$
	crucially!
	
	From
	\[
	(k-\ell)^2 =  (n+1)^2-4k\ell
	\]
	we have
	\begin{align*}
	ij \frac{(k-\ell)^2}{k\ell}
	- (i-j)^2
	&=
	ij \frac{(n+1)^2}{k\ell} - 4ij
	-(i^2-2ij+j^2)
	\cr
	&
	=
	ij \frac{(n+1)^2}{k\ell}-(n+1)^2
	\cr
	&= (n+1)^2 
	\left(\frac{ij}{k\ell} -1 \right)
	\end{align*}
	and (since $n$ is even)
	\begin{align*}
	\Lambda_n(a_\ell,\xi_{\ell,m})
	&=
	\left(\frac{n+1}{\sqrt{k \ell}}
	\right)^{n}
	\prod_{i+j=n+1 \atop 1 \leq i < j \leq n}
	\left[ 
	(m-\ell)^2-(m-\ell)(k-\ell)+ij-k \ell 
	\right]
	\cr
	&=
	\left(\frac{n+1}{\sqrt{k \ell}}
	\right)^n
	\prod_{i+j=n+1 \atop 1 \leq i < j \leq n}
	\left[ 
	m^2-(k+\ell)m+ij
	\right]
	\cr
	&=
	\left(\frac{n+1}{\sqrt{k \ell}}
	\right)^n
	\prod_{i+j=n+1 \atop 1 \leq i < j \leq n}
	\left[ 
	m^2-(i+j)m+ij
	\right]
	\cr
	&=
	\left(\frac{n+1}{\sqrt{k \ell}}
	\right)^n
	\prod_{i+j=n+1 \atop 1 \leq i < j \leq n}
	\left[ 
	(m-i)(m-j)
	\right].
	\end{align*}
	This shows that for all 
	$1 \leq m \leq n$ and
	$z=\xi_{\ell,m}$
	the quadratic factor
	belonging to $m=i$ and $m=j$
	vanishes.
\end{proof}

	Lemma \ref{lem7.3} 
	can be stated in the
	following (elegant) way (again, for even $n$).
	 For parameters $a,b$, let
	$\varepsilon_{a,b}$ be the affine function
	$\varepsilon_{a,b}(x):=a \cdot x + b$.
	Then {\small
	\[
	\det \left(
	\begin{array}{ccccc}
	\varepsilon_{a,b}(1) & n-1 & 0 & 0 &\ldots\cr
	1 & \varepsilon_{a,b}(2) & n-2 & 0 &
	\ldots \cr
	0 & 2 & \varepsilon_{a,b}(3) & n-3 &
	\ldots \cr
	\vdots & \vdots & \ddots & \ddots &
	\vdots
	\cr
	0 & \ldots &n-2 & \varepsilon_{a,b}(n-1) & 1
	\cr
	0 & \ldots & 0 &
	n-1 & \varepsilon_{a,b}(n)
	\end{array} \right)
	=
	\prod_{1 \leq i < j \leq n \atop i+j=n+1}
	\left[
	\varepsilon_{a,b}(i) \cdot
	\varepsilon_{a,b}(j) -(i-j)^2
	\right].
	\] }
	
Now we are in a position to complete the proof of \eqref{eq7.1}.
	
\begin{proof}[Proof of equation \eqref{eq7.1}]
 For $n$ even we have
	to show 
$$
	\det {\footnotesize \left(
	\begin{array}{cccccc}
	z(a+n/2) & n -1 & 0 &  \ldots & 0 & 0
	\cr
	1 & z(a+n/2-1) & n-2 
	& \ldots & 0 & 0
	\cr
	0 & 2 & z(a+n/2-2) 
		& \ldots & 0 & 0
	\cr
	\vdots & \vdots & \ddots 
	& \ddots & \vdots & \vdots
	\cr
	0& 0  & 0 & 0 &z(a-n/2+2) & 1
	\cr
	0 &
	0 & 0 & 0 & n-1 & z(a-n/2+1)
	\end{array}  \right)    }
$$
$$
= \prod_{j=1}^{n/2}
	\left[
	z^2(a+j)(a-j+1) - (2j-1)^2 \right]
$$
	
	Replacing $a$ by $a+n/2$, the matrix becomes
$$
{\footnotesize \left(
	\begin{array}{cccccc}
	z(a+n) & n -1 & 0  
	& \ldots & 0 & 0
	\cr
	1 & z(a+n-1) & n-2 
	& \ldots & 0 & 0
	\cr
	0 & 2 & z(a+n-2) 
		& \ldots & 0 & 0
	\cr
	\vdots & \vdots & \ddots 
	& \ddots & \vdots & \vdots
	\cr
	0 & 0 & 0&n-2 &z(a+2) & 1
	\cr
	0 &
	0 & 0 & 0 & n-1 & z(a+1)
	\end{array}, \right)  }
$$
	and this is (up to reordering
	rows and columns)
	\[
{\footnotesize \left(
	\begin{array}{ccccc}
	\varepsilon_{z,az}(1) & n-1 & 0 & 0 &\ldots\cr
	1 & \varepsilon_{z,az}(2) & n-2 & 0 &
	\ldots \cr
	0 & 2 & \varepsilon_{z,az}(3) & n-3 &
	\ldots \cr
	\vdots & \vdots & \ddots & \ddots &
	\vdots
	\cr
	0 & \ldots &n-2 & \varepsilon_{a,az}(n-1) & 1
	\cr
	0 & \ldots & 0 &
	n-1 & \varepsilon_{z,az}(n)
	\end{array} \right).   }
	\]
According to Lemma \ref{lem7.3} the determinant of the last matrix is
\begin{align*}
	&\prod_{1 \leq i < j \leq n \atop i+j=n+1}
	\left[
	\varepsilon_{z,az}(i) \cdot
	\varepsilon_{z,az}(j) -(i-j)^2
	\right] \cr
	&=\prod_{1 \leq i < j \leq n \atop i+j=n+1}
	\left[
	(z\cdot i+a\cdot z)(z \cdot j+a \cdot z)-(i-j)^2
	\right]
	\cr
	&=
	\prod_{1 \leq i < j \leq n \atop i+j=n+1}
	\left[
	z^2(a^2 + (i+j)a + ij)-(i-j)^2
	\right]
	\cr
	&=
	\prod_{1 \leq i \leq n/2} 
	\left[
	z^2 (a+i)(a+n-i+1)-(n+1-2i)^2
	\right]
	\cr
	&=
	\prod_{1 \leq i \leq n/2} 
	\left[
	z^2(a+n/2-i+1)(a+n/2+i)-(2i-1)^2
	\right].
	\end{align*}
Replacing back $a + \frac{n}{2}$ by $a$, this gives the assertion.
\end{proof}


\end{document}